\documentclass[11pt,a4paper]{article}
\usepackage[utf8]{inputenc}
\usepackage[T1]{fontenc}
\usepackage[english]{babel}
\usepackage{textcomp}

\usepackage{amsmath,amssymb,amsopn,amsthm}
\usepackage{lmodern}
\usepackage[a4paper]{geometry}
\usepackage{graphicx}
\usepackage{xcolor}
\usepackage{microtype}
\usepackage{lipsum}
\usepackage{array}
\usepackage{mathtools}
\usepackage{tikz}
\usepackage{enumitem}

\usepackage{hyperref}

\newcommand{\F}{\mathbb{F}}

\theoremstyle{plain}
\newtheorem{theoreme}{Theorem}[section]
\newtheorem*{theoreme*}{Theorem}
\newtheorem{proposition}[theoreme]{Proposition}
\newtheorem*{proposition*}{Proposition}
\newtheorem{corollaire}[theoreme]{Corollary}
\newtheorem{lemme}[theoreme]{Lemma}

\theoremstyle{definition}

\newtheorem{remarque}[theoreme]{Remark}
\newtheorem{exemple}[theoreme]{Example}

\title{\textbf{Explicit formulas and exact values for the number of rational points on singular curves over finite fields}}

\author{\textsc{Lorenzo Beninati}\thanks{This work was partially supported by the French National Research Agency under grant ANR-21-CE39-0009-BARRACUDA.}}

\date{}

\begin{document}

\maketitle

\begin{abstract}
We provide new explicit formulas for bounding the number of rational points on singular curves over finite fields. This enables us to obtain exact values of $N_q(g,\pi)$ which is defined as the maximum number of rational points over $\F_q$ on a curve of geometric genus $g$ and arithmetic genus $\pi$. We also give special attention to the case $g=2$ in order to extend the work of Aubry and Iezzi on $N_q(0,\pi)$ and $N_q(1,\pi)$.
\end{abstract} 

\section{Introduction}

The study of the number of rational points on curves is a classical topic in algebraic geometry, with many applications in cryptography or coding theory for example. Throughout this paper, by the word \textit{curve} we shall mean an absolutely irreducible projective algebraic curve. According to Weil (see \cite{weil}), for any integer $n\geq 1$, the number of $\F_{q^n}$-rational points on a smooth curve $X$ defined over $\F_q$ of genus $g$ can be expressed as
\begin{equation} \label{weil}
\# X(\F_{q^n}) = q^n+1 - \sum_{j=1}^g({\omega_j}^n+{\overline{\omega}_j}^n),
\end{equation}
where $\omega_j$ are the reciprocal roots of the numerator polynomial $L_X(T)$ of the zeta function of $X$. Moreover, Weil proved that there are complex algebraic integers that satisfy the Riemann hypothesis for curves over finite fields, i.e. $\lvert \omega_j \rvert =\sqrt{q}$. From this expression, one can deduce a bound for the number of rational points, known as the Weil bound and given by
$$
\lvert \# X(\F_q) - (q +1) \rvert \leq 2g\sqrt{q}.
$$
However, this bound is not optimal in general, especially when $g$ is large with respect to $q$. Consequently, several methods have been studied to obtain sharper bounds. One of them relies on explicit formulas introduced by Serre in \cite{serre2}, whose main idea can be summarized as follows. Consider 
$$
f(\theta) = 1 + 2\sum_{n\geq 1}c_n \cos(n\theta)
$$ 
an even trigonometric polynomial to which we associate the following polynomials
$$
\psi(t) = \sum_{n\geq 1}c_n t^n \quad \text{and} \quad \psi_d(t) = \sum_{d \vert n}c_n t^n .
$$
We then have the following result (Theorem 5.3.2 in \cite{serre}).
\begin{theoreme}[Serre] \label{thm_serre}
Let $X$ be a smooth curve defined over $\F_q$ of genus $g$ and let $\omega_j$ be the reciprocal roots of $L_X(T)$ expressed in the form $\omega_j=\sqrt{q}e^{i\theta_j}$, with $0\leq \theta_j \leq \pi$. Then we have
$$
\sum_{j=1}^g f(\theta_j) + \sum_{d\geq 1} d B_d(X) \psi_d(q^{-1/2}) = g + \psi(q^{-1/2})+\psi(q^{1/2}) ,
$$
where $B_d(X)$ denotes the number of closed points of degree $d$ of $X$.
\end{theoreme}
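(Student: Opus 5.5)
The plan is to start from the Weil formula \eqref{weil}, rewrite the point counts $\#X(\F_{q^n})$ through the closed points of $X$, multiply by $c_n q^{-n/2}$, and sum over $n$. Since $f$ is a trigonometric polynomial, only finitely many $c_n$ are nonzero, so every sum below is finite and there are no convergence issues.

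First I write each root as $\omega_j=\sqrt{q}e^{i\theta_j}$, so that $\overline{\omega}_j=\sqrt{q}e^{-i\theta_j}$. Then
\[
\omega_j^n+\overline{\omega}_j^n=2q^{n/2}\cos(n\theta_j).
\]
Next I count $\F_{q^n}$-points via closed points: a closed point of degree $d$ contributes exactly $d$ geometric points over $\F_{q^n}$ when $d\mid n$, and none otherwise. Hence
\[
\#X(\F_{q^n})=\sum_{d\mid n} dB_d(X).
\]
Substituting both into \eqref{weil} gives, for every $n\ge1$,
\[
\sum_{d\mid n} dB_d(X)=q^n+1-2q^{n/2}\sum_{j=1}^g\cos(n\theta_j).
\]

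Now I multiply this identity by $c_nq^{-n/2}$ and sum over $n\geq1$.

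On the left, I interchange the sums:
\[
\sum_n c_nq^{-n/2}\sum_{d\mid n}dB_d=\sum_d dB_d\sum_{d\mid n}c_nq^{-n/2}=\sum_d dB_d\,\psi_d(q^{-1/2}).
\]

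On the right, the terms split as follows:
\begin{itemize}
\item $\sum_n c_nq^{n/2}=\psi(q^{1/2})$;
\item $\sum_n c_nq^{-n/2}=\psi(q^{-1/2})$;
\item $-\sum_j 2\sum_n c_n\cos(n\theta_j)=-\sum_j\bigl(f(\theta_j)-1\bigr)=g-\sum_j f(\theta_j)$.
\end{itemize}
Moving $\sum_j f(\theta_j)$ to the left-hand side yields exactly the claimed identity.

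The only step needing some care is the divisor-sum interchange together with the identification of $\#X(\F_{q^n})$ with $\sum_{d\mid n}dB_d$, that is, the standard bijection between $\F_{q^n}$-points and Galois orbits of degree dividing $n$. Both are routine. The convention $0\le\theta_j\le\pi$ plays no role, because $f$ is even.
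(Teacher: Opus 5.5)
Your proof is correct and takes essentially the same approach as the paper. The paper cites this theorem from Serre without proof, but its proof of Proposition~\ref{prop_fe_LT} carries out exactly your computation: write $\#X(\F_{q^n})=\sum_{d\mid n}dB_d(X)$, substitute the point-count formula, multiply by $c_nq^{-n/2}$ and sum over $n$. The only difference there is the extra cyclotomic term $-\sum_k\beta_k^n$ coming from the singularities.
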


Moreover, by adding extra hypotheses on $f$ it is possible to obtain an upper bound for the number of rational points. More precisely, we shall assume that $f(\theta)\geq 0$ for all $\theta \in \mathbb{R}$ and that $c_n \geq 0$ for all $n\geq 1$. Such a function is called \textit{doubly positive}. A direct consequence of Theorem \ref{thm_serre} is the following result.

\begin{corollaire}[Corollary 5.3.4 in \cite{serre}]\label{borne_exp_lisse}
Under the same assumptions as in Theorem \ref{thm_serre}, if we also suppose that $f$ is doubly positive then we have
$$
\# X(\F_q) \leq \frac{g}{\psi(q^{-1/2})}+1+\frac{\psi(q^{1/2})}{\psi(q^{-1/2})}.
$$
\end{corollaire}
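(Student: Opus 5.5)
The plan is to start from the identity of Theorem \ref{thm_serre} and use the two positivity conditions to throw away every term that cannot help the inequality, keeping only the contribution of the rational points.

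First, $f(\theta)\geq 0$ for all $\theta$, so $\sum_{j=1}^g f(\theta_j)\geq 0$ and this sum can be dropped from the left-hand side.

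Second, since $c_n\geq 0$ and $q^{-1/2}>0$, each $\psi_d(q^{-1/2})=\sum_{d\mid n}c_n q^{-n/2}$ is nonnegative. The integers $d$ and $B_d(X)$ are nonnegative as well, so every term with $d\geq 2$ in $\sum_{d\geq1} dB_d(X)\psi_d(q^{-1/2})$ can be dropped. For $d=1$ we have $B_1(X)=\#X(\F_q)$ and $\psi_1=\psi$. The identity therefore gives
$$
\#X(\F_q)\,\psi(q^{-1/2}) \leq g + \psi(q^{-1/2}) + \psi(q^{1/2}).
$$

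Finally, we divide by $\psi(q^{-1/2})$, which requires it to be strictly positive. Since all $c_n\geq 0$, it vanishes only when $f\equiv 1$. In that case the right-hand side of the stated bound is undefined, so implicitly some $c_n>0$, and then $\psi(q^{-1/2})>0$. Dividing yields exactly
$$
\#X(\F_q)\leq \frac{g}{\psi(q^{-1/2})}+1+\frac{\psi(q^{1/2})}{\psi(q^{-1/2})}.
$$

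There is no real obstacle here. The only point needing care is this degenerate case $f\equiv 1$, which the statement tacitly excludes by requiring $\psi(q^{-1/2})\neq 0$.
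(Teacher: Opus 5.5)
Your proof is correct and is exactly the direct deduction the paper has in mind (the same step appears in its proof of Proposition~\ref{prop_fe_LT}): drop $\sum_j f(\theta_j)\geq 0$ and the $d\geq 2$ terms, keep $B_1(X)\psi(q^{-1/2})=\#X(\F_q)\psi(q^{-1/2})$, and divide by $\psi(q^{-1/2})$. Your remark that $\psi(q^{-1/2})>0$ unless $f\equiv 1$ correctly handles the only degenerate case.
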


The aim is then to find the best choice of the polynomial $f$ in order to obtain the best possible bound for the number of rational points of $X$. This optimization problem has been completely solved (for $q \neq 2$) by Oesterlé (see Chapter 6 in \cite{serre}). These optimized bounds, also called Oesterlé bounds, generally provide the sharpest estimates for $N_q(g)$ which is defined as the maximum number of rational points of a smooth curve defined over $\F_q$ of genus $g$.

One may then ask whether this method can be extended to potentially singular curves. An initial positive answer was given by Aubry and Perret (see \cite{aubry}), they state that
\begin{equation} \label{fe_aubry}
\# X(\F_q) \leq \frac{1}{\psi(q^{-1/2})}\bigg(g+\frac{\pi-g}{2}\bigg)+1+\frac{\psi(q^{1/2})}{\psi(q^{-1/2})},
\end{equation}
where $\pi$ denotes the arithmetic genus of the potentially singular curve $X$.

In this paper we show that it is possible to obtain another explicit formula for singular curves that always improve (\ref{fe_aubry}). More precisely, we prove the following proposition in the next section.

\begin{proposition*}[Proposition \ref{prop_fe_LT}]
Let $X$ be a curve defined over $\F_q$ of geometric genus $g$ and arithmetic genus $\pi$. We have
$$
\# X(\F_q) \leq \frac{1}{\psi(q^{-1/2})}\bigg(g+\frac{\pi-g}{\sqrt{q}+1}\bigg)+1+\frac{\psi(q^{1/2})}{\psi(q^{-1/2})}.
$$
\end{proposition*}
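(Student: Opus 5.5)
We reduce to the normalization and control the contribution of the singular points by a local count that is sharper than the one used by Aubry and Perret. Let $\nu:\tilde X\to X$ be the normalization, a smooth curve over $\F_q$ of genus $g$. Write $\mathcal{S}$ for the set of singular closed points of $X$. For $P\in\mathcal{S}$ let $\delta_P=\dim_{\F_q}(\tilde{\mathcal O}_P/\mathcal O_P)$ be the local contribution, so that $\pi-g=\sum_{P\in\mathcal S}\delta_P$.

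Since $X$ and $\tilde X$ agree outside $\mathcal S$, we have
$$\#X(\F_q)=\#\tilde X(\F_q)-\sum_{P\in\mathcal S,\ \deg P=1}\bigl(r_P-1\bigr),$$
where $r_P$ is the number of rational points of $\tilde X$ above the rational point $P$. The singular points of degree one with $r_P=0$ add points not counted by $\tilde X$. This is exactly where the arithmetic genus enters.

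The plan is to apply Theorem \ref{thm_serre} to $\tilde X$ and not only Corollary \ref{borne_exp_lisse}. Since $f\ge0$ and the $c_n\ge0$, every term $dB_d(\tilde X)\psi_d(q^{-1/2})$ is nonnegative. This gives
$$B_1(\tilde X)\psi(q^{-1/2})+\sum_{d\ge2}dB_d(\tilde X)\psi_d(q^{-1/2})\le g+\psi(q^{-1/2})+\psi(q^{1/2}).$$
We then add $(1-r_P)\psi(q^{-1/2})$ for each rational singular point $P$. The goal is to show that the total excess satisfies
$$\sum_{P\in\mathcal S,\ \deg P=1}(1-r_P)\,\psi(q^{-1/2})\;\le\;\frac{\pi-g}{\sqrt q+1}+\text{(terms absorbed by the }d\ge2\text{ sum)}.$$

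The local input is the following. If $P$ is rational with $r_P=0$, the points above $P$ all have degree $\ge2$. Moreover $\delta_P\ge\deg(\nu^{-1}(P))-1$, since $\mathcal O_P$ contains $\F_q$ while $\tilde{\mathcal O}_P/\mathfrak m$ has dimension $\sum\deg Q$. So either some point $Q$ of degree $d\ge2$ lies above $P$, whose term in the $d\ge2$ sum can be used, or $\delta_P$ is large. More generally, $\delta_P\ge \sum_{Q\mid P}\deg Q-1$ for each $P$.

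The key numerical fact is an inequality for coefficients of a doubly positive $f$ (Oesterlé-type): $\psi(q^{-1/2})\le \tfrac{?}{}$, relating $\psi_d$ to $\psi$. The crucial case is $d=2$. For a point above $P$ of degree $2$ we have $\delta_P\ge1$, and we want
$$\psi(q^{-1/2})\le 2\psi_2(q^{-1/2})+\frac{1}{\sqrt q+1}.$$
To obtain it, evaluate the positivity $f(\theta)\ge0$ at a suitable $\theta$. Taking $\theta=\pi$ gives $1+2\sum(-1)^nc_n\ge0$, i.e. $1+2\psi_2(1)\cdot2-2\psi(1)\ge0$ in the appropriate sense. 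More usefully, integrate $f\ge0$ against the positive Poisson kernel $P_r(\theta)$ with $r=q^{-1/2}$, shifted by $\pi$. This yields
$$1+2\sum c_n(-1)^nq^{-n/2}\ge0,\quad\text{i.e.}\quad \psi(q^{-1/2})-2\psi_2(q^{-1/2})\le\tfrac12.$$
Sharpening $\tfrac12$ to $1/(\sqrt q+1)$ should come from combining this with the exact count of points: the degree-two points of $\tilde X$ over a rational singular point are constrained by Weil. We would try the weighted combination of evaluations at $\theta=\pi$ and $\theta=0$, or of the kernel at $\pm r$. For degree $d\ge3$ the bound $\delta_P\ge d-1$ gives more room, using $\psi(q^{-1/2})\le\ldots$ from $f(\theta)\ge0$ averaged over $d$-th roots of unity.

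Summing over $P$ and dividing by $\psi(q^{-1/2})$ gives the stated bound. The main obstacle is the exact local inequality with constant $1/(\sqrt q+1)$ rather than $1/2$. I expect it to require a careful case analysis of rational singular points with no rational preimage, pairing each with its $\delta_P$ via the Poisson-kernel positivity trick above.
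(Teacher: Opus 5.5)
The outline of your proposal can be made to work, but it has a genuine gap: the one inequality that carries the proposition is never proved, and the repair you suggest would not work. The reduction itself is sound. Rational points of $\tilde X$ lie only above rational points of $X$. For rational $P$ one has $\delta_P\ge\sum_{Q\mid P}\deg Q-1$, which is the local fact behind $\Delta_X\le\pi-g$. If $r_P=0$ there is always some $Q\mid P$, necessarily of degree $d\ge2$, so your ``either/or'' is unnecessary. With $r=q^{-1/2}$, the proposition therefore reduces to
\[
\psi(r)\le d\,\psi_d(r)+\frac{d-1}{\sqrt q+1}\qquad(d\ge 2),
\]
applied once for each rational singular point with $r_P=0$.

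This inequality is the entire content of the statement beyond (\ref{fe_aubry}), and you do not establish it. Integrating $f\ge0$ against the Poisson kernel only gives $1+2\,\mathrm{Re}\,\psi(r\zeta)\ge0$, that is, a constant $\frac12$ per nontrivial $d$-th root of unity $\zeta$. Hence you get only $\psi(r)\le d\psi_d(r)+\frac{d-1}{2}$, which merely reproduces (\ref{fe_aubry}). Larger $d$ gives no extra room, since the requirement also grows like $d-1$. Your proposed sharpening via Weil or the count of degree-$2$ points of $\tilde X$ cannot work: the inequality involves only $f$ and $q$, not the curve.

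The missing idea is to use the \emph{minimum} of the Poisson kernel rather than its positivity, i.e. Harnack's inequality. We have $1+2\,\mathrm{Re}\,\psi(re^{i\theta})=\frac{1}{2\pi}\int_0^{2\pi}P_r(\theta-\varphi)f(\varphi)\,d\varphi$, with $f\ge0$, $\frac{1}{2\pi}\int_0^{2\pi}f=1$ and $P_r\ge\frac{1-r}{1+r}$. So $-\mathrm{Re}\,\psi(re^{i\theta})\le\frac{r}{1+r}=\frac{1}{\sqrt q+1}$ for every $\theta$; this is Lemma \ref{LT}. Sum it over the $d-1$ nontrivial $d$-th roots of unity and use $\sum_{\zeta^d=1}\psi(r\zeta)=d\,\psi_d(r)$. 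This gives the displayed inequality, and your local bookkeeping then closes.

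The paper does the same thing globally. It multiplies (\ref{pts_sing}) by $c_nq^{-n/2}$ and sums, obtaining Serre's identity for $X$ with an extra term $-\sum_{k\le\Delta_X}\mathrm{Re}\,\psi(q^{-1/2}\beta_k)$. It bounds each term by Lemma \ref{LT} and uses $\Delta_X\le\pi-g$. The $\beta_k$ attached to a singular point $P$ are the $\deg Q$-th roots of unity for $Q\mid P$, with the $\deg P$-th roots removed. So your completed argument is essentially the paper's proof unpacked point by point. It trades the zeta-function formula (\ref{pts_sing}) for the elementary local bound on $\delta_P$.
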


This new explicit formula enables us to obtain new upper bounds for $N_q(g,\pi)$. This quantity, defined by Aubry and Iezzi in \cite{iezzi} as a generalization of $N_q(g)$, denotes the maximum number of rational points on a curve defined over $\F_q$ of geometric genus $g$ and arithmetic genus $\pi$. Moreover, in the last section we show that, in certain cases, Proposition \ref{prop_fe_LT} provides not only sharper bounds but also new exact values for the quantity $N_q(g,\pi)$. These new values are summarized in the following result.

\begin{proposition*}[Proposition \ref{N_q(g,pi)}]
We have the following values for $N_q(g,\pi)$.\\
\noindent
\begin{minipage}[t]{0.33\textwidth}
\begin{itemize}[label=]
  \item $N_2(0,2)=4$,
  \item $N_2(5,6)=9$,
  \item $N_3(15,16)=28$,
  \item $N_4(26,27)=55$.
\end{itemize}
\end{minipage}
\begin{minipage}[t]{0.33\textwidth}
\begin{itemize}[label=]
  \item $N_2(1,2)=5$,
  \item $N_2(6,7)=10$,
  \item $N_3(15,17)=28$,
\end{itemize}
\end{minipage}
\begin{minipage}[t]{0.33\textwidth}
\begin{itemize}[label=]
  \item $N_2(4,5)=8$,
  \item $N_3(0,4)=7$,
  \item $N_4(9,10)=26$,
\end{itemize}
\end{minipage}
\end{proposition*}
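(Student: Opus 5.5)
The plan has two halves for each listed pair $(q,g,\pi)$: an upper bound from Proposition \ref{prop_fe_LT} (sometimes sharpened by an elementary count), and a lower bound from an explicit singular curve.

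\textbf{Upper bounds.} For each case I would apply Proposition \ref{prop_fe_LT} with a well-chosen doubly positive trigonometric polynomial $f$. The natural first choices are low-degree ones such as $f(\theta)=1+2c_1\cos\theta+2c_2\cos 2\theta$, or Oesterlé's optimal polynomials for the given $q$. I would then optimize numerically the right-hand side
$$\frac{1}{\psi(q^{-1/2})}\Big(g+\frac{\pi-g}{\sqrt q+1}\Big)+1+\frac{\psi(q^{1/2})}{\psi(q^{-1/2})}$$
over the coefficients $c_n\ge 0$, subject to $f\ge 0$, and take the integer part.

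When $\pi-g=1$ and $q$ is small, the correction $\frac{\pi-g}{\sqrt q+1}$ is below $1/2$. For the cases $N_2(5,6)$, $N_2(6,7)$, $N_2(4,5)$, $N_3(15,16)$, $N_4(26,27)$ and $N_4(9,10)$, I expect the floor to land exactly on $N_q(g)$. That would show that raising the arithmetic genus by one cannot produce an extra rational point.

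If the floor is off by one, I would fall back on the structure of the normalization $\nu\colon\tilde X\to X$. Writing $\#X(\F_q)\le \#\tilde X(\F_q)+\#\{\text{rational singular points not coming from rational points of }\tilde X\}$, each such point costs at least one unit of $\delta$. It also requires a closed point of degree $\ge 2$ on $\tilde X$ in its preimage. I would then rerun Theorem \ref{thm_serre} on $\tilde X$ with the constraint that $B_2(\tilde X)$ (or $B_1+B_2$) is forced to be large. This extra information should cut the bound by the missing unit.

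\textbf{Lower bounds.} I would use the standard constructions of Aubry--Iezzi.
\begin{itemize}
\item Starting from a smooth optimal curve of genus $g$ with $N_q(g)$ points, create a unibranch (cusp-like) singularity at a rational point. This raises $\pi$ by $1$ without changing the number of rational points, so $N_q(g,\pi)\ge N_q(g)$ for every $\pi\ge g$. This settles the lower bound in every case where the answer equals the smooth maximum, using the known values $N_2(1)=5$, $N_2(4)=8$, $N_2(5)=9$, $N_2(6)=10$, $N_3(15)=28$, $N_4(9)=26$, $N_4(26)=55$. For $N_3(15,17)$, two cusps give $\pi=17$.
\item Collapsing the two geometric points of a closed point of degree $2$ on $\tilde X$ into one rational point adds one rational point and one to $\pi$. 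For $N_3(0,4)=7$: $\mathbb P^1$ over $\F_3$ has $4$ rational points and enough degree-$2$ points, and collapsing three of them gives $7$ points with $\pi=3$. For $N_2(0,2)=4$: collapse one degree-$2$ point to get $4$ points with $\pi=1$, then add a cusp to reach $\pi=2$.
\end{itemize}
To reach $\pi=4$ in the $N_3(0,4)$ case, I would either add a cusp, or glue a degree-$2$ point onto an existing singular point to increase $\delta$ while keeping $7$ points.

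\textbf{Main obstacle.} The hard step is the upper bounds. For $g=0$ and $g=1$, the values $N_2(0,2)$, $N_2(1,2)$ and $N_3(0,4)$ are where Proposition \ref{prop_fe_LT} with $(\pi-g)/(\sqrt q+1)$ may not be sharp enough, since $\pi-g\ge 2$ there. For these cases I would argue directly by counting. Every rational point of $X$ either comes from a rational point of $\tilde X$ or is a singular point whose preimage contains a point of degree $\ge 2$. The total $\delta$ is $\pi-g$, and $\tilde X$ has known and very limited numbers $B_1,B_2,\dots$ of closed points of small degree; for example $\mathbb P^1_{\F_2}$ has $B_1=3$ and $B_2=1$. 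So the number of new rational points is bounded by $\min(\pi-g,\ B_2+B_3+\cdots)$ together with the degree sum of the chosen points. I would verify case by case that this gives exactly $4$, $5$ and $7$.
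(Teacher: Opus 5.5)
For seven of the ten values your plan is the paper's: the lower bound is $N_q(g)\le N_q(g,\pi)$ via a cusp, and the upper bound comes from Proposition~\ref{prop_fe_LT}. However, the step you leave as ``I expect the floor to land on $N_q(g)$'' is the whole content of the proof, and you never carry it out. The low-degree polynomials you suggest first are not enough: with $f=1+\sqrt2\cos\theta+\frac12\cos2\theta$ you only get $N_2(4,5)\le 10$.

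The paper uses the $r=5$ polynomial $f=1+\sqrt3\cos\theta+\frac76\cos2\theta+\frac{\sqrt3}{3}\cos3\theta+\frac16\cos4\theta$. It gives right-hand sides $\approx 8.75,\ 9.72,\ 10.69$ for $q=2$, $g=4,5,6$. It gives $\approx 28.48$ and $28.96$ for $N_3(15,16)$ and $N_3(15,17)$, and $\approx 55.89$ for $N_4(26,27)$. The $r=4$ polynomial gives $\approx 26.95$ for $N_4(9,10)$. So $N_3(15,17)$, which has $\pi-g=2$ and is missing from your list, is also handled directly, and your fallback through Theorem~\ref{thm_serre} is never needed.

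For $N_2(0,2)$, $N_2(1,2)$ and $N_3(0,4)$ you take a genuinely different route for the upper bound. Your lower-bound constructions there match the values $N_2(0,1)=4$ and $N_3(0,3)=7$ that the paper quotes from Aubry--Iezzi. For the upper bound, the paper simply applies Proposition~\ref{prop_fe_LT} with $f=1+\sqrt2\cos\theta+\frac12\cos2\theta$, getting $\approx 4.73$, $5.66$ and $7.995$. So your worry that it is not sharp enough is unfounded, and $N_2(1,2)$ has $\pi-g=1$, not $\ge 2$.

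Your counting argument is nonetheless valid, provided you charge each rational $P$ with $\delta_P\ge\sum_{Q\mapsto P}\deg Q-1$. The bare bound $\min(\pi-g,B_2+B_3+\cdots)$ would only give $5$ for $N_2(0,2)$. With the sharper cost:
\begin{itemize}
\item On $\mathbb P^1_{\F_2}$ the unique point of degree $2$ costs $1$ and every other non-rational point costs at least $2$, so $\delta=2$ buys at most one new point.
\item On $\mathbb P^1_{\F_3}$ there are only three points of degree $2$, so $\delta=4$ buys at most three new points.
\item An elliptic curve over $\F_2$ with $5$ points has trace $-2$, hence $5$ points over $\F_4$ and $B_2=0$, so $\delta=1$ buys nothing.
\end{itemize}

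This route is elementary and avoids explicit formulas. Its cost is that it needs the $B_d$ of every possible normalization, which is only practical in genus $0$ and $1$. The paper's route handles all ten cases with a single inequality.
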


Finally, motivated by the work of Aubry and Iezzi on $N_q(0,\pi)$ and $N_q(1,\pi)$ (Corollary 5.4 and Corollary 5.5 in \cite{iezzi}), we investigate the case of $N_q(2,\pi)$ with alternative methods. More precisely, we focus on values of $N_q(2,\pi)$ that attain the upper bound $N_q(2)+\pi-2$. This leads us to consider smooth curves having $N_q(2)$ rational points, originally studied by Serre (see Chapter 3 in \cite{serre}). In particular, we study what is called the \textit{defect} of such curves, which depends on the value of $q$.\\
The next theorem describes all values of $N_q(2,\pi)$ that reach the upper bound $N_q(2)+\pi-2$, according to the values of $q$ and $\pi$.

\begin{theoreme*}[Theorem \ref{g=2}]
Let $q$ be a power of a prime $p$. Denote by $m=[2\sqrt{q}]$ the integer part of $2\sqrt{q}$ and by $\{2\sqrt{q}\}$ its fractional part.\\
If $q$ is a square:
\begin{itemize}[label=\textbullet]
\item $N_4(2,\pi)=10+\pi-2$ if and only if $2 \leq \pi \leq 6$.
\item $N_9(2,\pi)=20+\pi-2$ if and only if $2 \leq \pi \leq 26$.
\item $N_q(2,\pi)=q+1+4\sqrt{q}+\pi-2$ if and only if $2 \leq \pi \leq 2+\frac{q^2-5q-4\sqrt{q}}{2} \text{ and } q\neq 4,9$.
\end{itemize}
If $q$ is not a square:
\begin{itemize}[label=\textbullet]
\item If $q$ is non-special, \\
$N_q(2,\pi)=q+1+2m+\pi-2$ if and only if $2 \leq \pi \leq 2+\frac{q(q+3)}{2}-m(m+1)$.
\item If $q$ is special and $\{2\sqrt{q}\}>\frac{\sqrt{5}-1}{2}$,\\
$N_q(2,\pi)=q+2m+\pi-2$ if and only if $2 \leq \pi \leq 2+\frac{q(q+3)}{2}-m^2-1$.
\item If $q$ is special and $\{2\sqrt{q}\}<\frac{\sqrt{5}-1}{2}$,
\begin{itemize}
\item $N_q(2,\pi)=q-1+2m+\pi-2$ if and only if $2 \leq \pi \leq 2+\frac{q(q+3)}{2}-m(m-1) \text{ and } q\neq 2^5,2^{13}$.
\item $N_q(2,\pi)=q-1+2m+\pi-2$ if and only if $2 \leq \pi \leq 2+\frac{q(q+3)}{2}-m(m-1)-1 \text{ and } q = 2^5,2^{13}$.
\end{itemize}

\end{itemize}
\end{theoreme*}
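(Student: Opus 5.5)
We reduce to a statement about smooth genus-$2$ curves and the closed points of degree $2$ on them. The tool is the Aubry--Iezzi construction from \cite{iezzi}. Given a smooth curve $\tilde X$ of genus $g$, one can build a singular curve $X$ with normalization $\tilde X$ and arithmetic genus $\pi$ in two ways. Gluing a degree-$2$ closed point of $\tilde X$ into a rational point adds one rational point and raises the genus by $1$. More generally, singularities may be created at rational points with prescribed local contributions. Their bound is $\#X(\F_q)\le \#\tilde X(\F_q)+\pi-g$, and equality forces every singular point to be rational, each raising $\delta$ by exactly one point gained. So $\pi-g$ extra rational points are gained only when each unit of $\delta$ is paid for by a closed point of degree $2$ on the normalization being collapsed to a rational point, or by some equivalent configuration.

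The key lemma I would prove is the following. $N_q(g,\pi)=N_q(g)+\pi-g$ if and only if some smooth curve $\tilde X$ of genus $g$ with $N_q(g)$ rational points satisfies
$$
\pi-g\le B_2(\tilde X)+\text{(correction)}.
$$
Here the maximal amount of $\delta$ that can be converted into rational points is bounded by a count involving $B_2(\tilde X)$ and perhaps $B_1$. In Aubry--Iezzi's $g=0,1$ results this quantity is exactly $B_2(\tilde X)$, since a degree-$2$ point yields a rational point with $\delta=1$. I would check this for $g=2$ as well. With the lemma in hand, the problem reduces to computing $\max B_2(\tilde X)$ over optimal genus-$2$ curves.

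For a genus-$2$ curve, $B_2=\frac{1}{2}\big(\#\tilde X(\F_{q^2})-\#\tilde X(\F_q)\big)$. Write $a_n=\sum_j(\omega_j^n+\overline\omega_j^n)$. Then $\#\tilde X(\F_q)=q+1-a_1$ and $\#\tilde X(\F_{q^2})=q^2+1-a_2$, where $a_2=a_1^2-2s-2q\cdot 2$ and $s$ is the second symmetric function of the traces $x_j=\omega_j+\overline\omega_j$. Knowing the Weil polynomials of optimal curves, which Serre classified in Chapter 3 of \cite{serre}, lets us compute $B_2$ case by case.
\begin{itemize}
\item If $q$ is a square, the traces are $x_1=x_2=-2\sqrt q$. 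This gives $B_2=\frac{q^2-5q-4\sqrt q}{2}$, matching the stated bound.
\item If $q$ is non-special, $x_1=x_2=-m$. This gives $B_2=\frac{q(q+3)}{2}-m(m+1)$.
\item If $q$ is special, the admissible polynomials have traces $\{-m,-m+1\}$ when $\{2\sqrt q\}>\frac{\sqrt5-1}{2}$, and a pair such as $x^2+mx+m^2-\dots$ with roots $\frac{-m\pm\sqrt5}{2}$-type traces in the other case. Here $B_2$ must be maximized over the few possible $L$-polynomials.
\end{itemize}
The exceptional $q=4,9$ come from Serre's result that $N_4(2)=10$ and $N_9(2)=20$ fall below the Weil--Serre bound. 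There I would instead use the explicit optimal curves and compute their $B_2$ directly, obtaining $4$ and $24$.

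The "only if" direction needs two facts. Any curve attaining $N_q(2)+\pi-2$ has an optimal normalization; otherwise $N_q(2)-1+\pi-2$ is the most available, using the upper bound from \cite{iezzi}. Its $B_2$ must also be at least $\pi-2$.

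I expect the main obstacle to be the special-$q$ case with $\{2\sqrt q\}<\frac{\sqrt5-1}{2}$, together with the anomalies $q=2^5,2^{13}$. There one must determine which Weil polynomials are actually realized by Jacobians, since some isogeny classes contain no Jacobian (Howe--Nart--Ritzenthaler type obstructions). For $q=2^5,2^{13}$ the $B_2$-maximizing class presumably is not realized, or is realized only by a product of elliptic curves, so the next-best class loses exactly one degree-$2$ point. I would settle these by consulting Serre's and Howe's tables and checking the finitely many candidate polynomials explicitly.
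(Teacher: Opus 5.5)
Your reduction is the same as the paper's. It is Theorem 5.3 of Aubry--Iezzi, which holds for every $g$ with no correction term: $N_q(g,\pi)=N_q(g)+\pi-g$ if and only if $\pi-g\le B_2(\mathcal{X}_q(g))$. Gaining one rational point per unit of $\delta$ forces every singular point to be rational, with $\delta_P=1$ and a single degree-$2$ point above it. Your expression of $B_2$ in terms of $x_1,x_2$ is also correct.

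The gap is the step you postpone, maximizing $B_2$ over the $L$-polynomials of optimal curves. You never bring in the tools that decide which pairs $(x_1,x_2)$ actually occur:
\begin{itemize}
\item Smyth's list of pairs of defect at most $3$;
\item the constraint $\lvert x_i\rvert\le 2\sqrt q$;
\item Serre's indecomposability theorem: if the $x_i$ split into two non-empty Galois-stable sets whose cross-differences are all units, the abelian surface is not a Jacobian.
\end{itemize}
Without these, your case data is wrong exactly where it matters. For $q$ special with $\{2\sqrt q\}>\frac{\sqrt5-1}{2}$, the pair $\{-m,-m+1\}$ you name is the one Serre's theorem excludes, since $m-(m-1)=1$ is a unit. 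Using it would give $B_2=\frac{q(q+3)}{2}-m^2$, one more than the statement. The only realizable defect-$1$ pair is $x=-m+\frac{1\pm\sqrt5}{2}$, which gives $\frac{q(q+3)}{2}-m^2-1$. The condition $m+\frac{\sqrt5-1}{2}\le 2\sqrt q$ for this pair is where the threshold on $\{2\sqrt q\}$ comes from.

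In the other special case there is no $\sqrt5$. The defect is $2$, and the admissible candidates are $-(m-1,m-1)$, $-(m,m-2)$ and $-(m-1\pm\sqrt2)$. The $\sqrt3$ pair violates $\lvert x_i\rvert\le 2\sqrt q$ because $\sqrt3-1>\frac{\sqrt5-1}{2}$. These three give $B_2$ equal to $\frac{q(q+3)}{2}-m(m-1)$ minus $0$, $1$, $2$ respectively. You then need that $-(m-1,m-1)$ is realized by a Jacobian: Serre gives this for $q\ge 5$, and $q=3$ needs an explicit curve.

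Your guess for $q=2^5,2^{13}$ is right in spirit, and the reason is simple. Here $m-1$ is even and different from $0,\pm\sqrt{2q}$, so it is not the trace of an elliptic curve over $\F_q$, and Honda--Tate excludes a surface with $x_1=x_2=-(m-1)$. That leaves $-(m,m-2)$, which loses exactly one point.

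Finally, for $q=4,9$, computing $B_2$ on one explicit optimal curve only gives a lower bound for $B_2(\mathcal{X}_q(2))$. For $q=4$, the defect-$3$ class $-(3,2)$ satisfies $\lvert x_i\rvert\le 4$ and would give $B_2=5$. So $\pi\le 6$ holds only after this class is discarded by the indecomposability theorem; the remaining classes $-(4,1)$ and $-\frac{5\pm\sqrt5}{2}$ give $3$ and $4$. For $q=9$, you need that the only admissible classes are $-(6,4)$ and $-(5,5)$, giving $23$ and $24$.
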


We conclude this paper with a brief but noteworthy example. We show that in some cases, studying the number of rational points on singular curves provides information about smooth curves, which highlights the relevance of their study.

\section{Explicit formulas for singular curves}

\subsection{Preliminaries}

We begin by recalling some generalities on singular curves. Let $X$ be a singular curve and let $\tilde{X}$ be its normalization. For any point $P \in X$, we denote by $\mathcal{O}_P$ the local ring of $X$ at $P$, by $\mathfrak{m}_P$ the maximal ideal of $\mathcal{O}_P$ and by $\text{deg}P = [\mathcal{O}_P/\mathfrak{m}_P : \F_q]$ the degree of $P$. If $\overline{\mathcal{O}_P}$ denotes the integral closure of $\mathcal{O}_P$ in the function field $\F_q(X)$ of $X$, we define the \textit{degree of singularity} $\delta_P$ of $P$ by
$$
\delta_P = \text{dim}_{\F_q}\overline{\mathcal{O}_P}/\mathcal{O}_P.
$$
In particular, $\delta_P=0$ if and only if $P$ is non-singular. Finally, since a curve has finitely many singular points, we can set
$$
\delta = \sum_{P \in X} \delta_P
$$
and define the \textit{arithmetic genus} $\pi$ by
$$
\pi = g+\delta,
$$
where $g$ is the \textit{geometric genus}, i.e. the genus of the normalization $\tilde{X}$. Aubry and Perret established in \cite{aubry} a relation between the number of rational points of a singular curve and that of its normalization, which reads
\begin{equation} \label{ine}
\vert \# \tilde{X}(\F_q) - \# X(\F_q) \vert \leq \pi-g.
\end{equation}
Moreover, they showed that the number of rational points of a singular curve is given by
\begin{equation} \label{pts_sing}
\# X(\F_{q^n}) = q^n +1 - \sum_{j=1}^g({\omega_j}^n+{\overline{\omega}_j}^n)- \sum_{k=1}^{\Delta_X}{\beta_k}^n,
\end{equation}
with $\omega_j$ the reciprocal roots of the numerator polynomial $L_{\tilde{X}}(T)$ of the zeta function of $\tilde{X}$, $\beta_k$ the reciprocal roots of the cyclotomic part of $L_X(T)$ and $\Delta_X = \#\tilde{X}(\overline{\F}_q) - \# X(\overline{\F}_q)$. Together with the inequality (\ref{ine}), this leads to the Aubry-Perret bound (Corollary 2.4 in \cite{aubry}), which generalizes the Weil bound and is given by
\begin{equation} \label{AP}
\vert \# X(\F_q) - (q +1) \vert \leq 2g\sqrt{q}+\pi-g.
\end{equation}

\subsection{Explicit formulas}

A first new explicit formula for the number of rational points on singular curves can be directly deduced from inequality (\ref{ine}) by applying Corollary \ref{borne_exp_lisse} to $\# \tilde{X}(\F_q)$. 

\begin{proposition} \label{prop_fes}
Let $X$ be a curve defined over $\F_q$ of geometric genus $g$ and arithmetic genus $\pi$. We have
\begin{equation} \label{fes}
\# X(\F_q) \leq \frac{g}{\psi(q^{-1/2})}+1+\frac{\psi(q^{1/2})}{\psi(q^{-1/2})}+\pi-g.
\end{equation}
\end{proposition}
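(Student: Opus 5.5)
The plan is to combine inequality (\ref{ine}) with Corollary \ref{borne_exp_lisse} applied to the normalization, exactly as the sentence preceding the statement suggests. The normalization $\tilde{X}$ of $X$ is a smooth absolutely irreducible projective curve defined over $\F_q$. By definition of the geometric genus, its genus is $g$. Hence, for any doubly positive $f$ with associated $\psi$, Corollary \ref{borne_exp_lisse} gives
$$
\#\tilde{X}(\F_q) \leq \frac{g}{\psi(q^{-1/2})}+1+\frac{\psi(q^{1/2})}{\psi(q^{-1/2})}.
$$

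Next I will use only the upper half of inequality (\ref{ine}), namely
$$
\#X(\F_q) \leq \#\tilde{X}(\F_q) + (\pi - g),
$$
where $\pi-g=\delta\geq 0$. Chaining the two displayed inequalities gives (\ref{fes}) directly. This holds for the same class of doubly positive $f$ as in Corollary \ref{borne_exp_lisse}, which is the implicit hypothesis of the statement.

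There is essentially no obstacle. The only points to check are:
\begin{itemize}
\item the normalization is defined over $\F_q$ and is absolutely irreducible, so that Corollary \ref{borne_exp_lisse} applies to it;
\item the hypotheses on $f$ (doubly positive, so that $\psi(q^{-1/2})>0$) are carried over from Corollary \ref{borne_exp_lisse}.
\end{itemize}
If one wanted to avoid invoking (\ref{ine}) as a black box, one could justify it directly. Each rational point $P$ of $X$ either is non-singular, in which case it has exactly one rational point above it in $\tilde{X}$, or is singular. In the singular case, the number of rational points of $X$ beyond those accounted for by points of $\tilde{X}$ is at most $\delta_P$. Summing over $P$ then gives the bound.
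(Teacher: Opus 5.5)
Your proof is correct and is the paper's own argument: apply Corollary \ref{borne_exp_lisse} to the normalization $\tilde{X}$, which is smooth of genus $g$, then add the upper half of inequality (\ref{ine}), namely $\#X(\F_q)\leq \#\tilde{X}(\F_q)+\pi-g$. Your optional direct justification of (\ref{ine}) also works, because every rational point of $X$ with no rational point above it in $\tilde{X}$ is singular and so contributes $\delta_P\geq 1$.
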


\begin{remarque} \label{rmk_comp}
A comparison between the two singular explicit formulas shows that the formula proposed by Aubry and Perret (\ref{fe_aubry}) is sharper than that of Proposition \ref{prop_fes} if and only if $\psi(q^{-1/2})>1/2$.
However, for a suitable choice of polynomial $f$, that is, one for which our bounds are as tight as possible, this condition generally does not hold since the coefficients $c_n$ are always less than $1$ (Lemma 5.5.3 in \cite{serre}). Moreover, it is clear that this condition is even less likely to hold when $q$ is large. We detail the method to obtain such a suitable polynomial in Subsection \ref{oesterlé}.
\end{remarque}

We illustrate the previous remark with a simple example.

\begin{exemple} \label{ex_fe}
Consider the doubly positive polynomial $f(\theta)=1+\cos(\theta)$. Then $\psi(t)=\frac{t}{2}$ and hence $\psi(q^{-1/2}) = \frac{1}{2\sqrt{q}}< \frac{1}{2}$ and $\psi(q^{1/2})=\frac{\sqrt{q}}{2}$. Bound (\ref{fe_aubry}) gives
$$
\# X(\F_q) \leq q+1+2g\sqrt{q}+(\pi-g)\sqrt{q},
$$
whereas bound (\ref{fes}) reads
$$
\# X(\F_q) \leq q+1+2g\sqrt{q}+\pi-g.
$$
In particular we recover the Aubry-Perret bound (\ref{AP}) with this choice of polynomial, which is known to be optimal in some cases (see \cite{iezzi2}).
\end{exemple}

In the following proposition we provide another singular explicit formula which is always better than (\ref{fe_aubry}) and often better than (\ref{fes}).

\begin{proposition} \label{prop_fe_LT}
Let $X$ be a curve defined over $\F_q$ of geometric genus $g$ and arithmetic genus $\pi$. We have
\begin{equation} \label{fe_LT}
\# X(\F_q) \leq \frac{1}{\psi(q^{-1/2})}\bigg(g+\frac{\pi-g}{\sqrt{q}+1}\bigg)+1+\frac{\psi(q^{1/2})}{\psi(q^{-1/2})}.
\end{equation}
\end{proposition}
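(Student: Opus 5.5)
The plan is to reprove the statement from the explicit formula for the singular curve itself, i.e. from (\ref{pts_sing}), rather than passing through $\tilde X$ and inequality (\ref{ine}). The latter route only gives Proposition \ref{prop_fes}. Write $t=q^{-1/2}$. For each $n\ge 1$, multiply (\ref{pts_sing}) by $c_n q^{-n/2}$ and sum over $n$, exactly as in the proof of Theorem \ref{thm_serre}. On the smooth part, using $\omega_j=\sqrt q e^{i\theta_j}$ and grouping closed points of $\tilde X$ by degree, this yields
$$
\sum_{j=1}^g f(\theta_j)+\sum_{d\ge1} dB_d(\tilde X)\psi_d(t)=g+\psi(t)+\psi(q^{1/2}).
$$
I then pass from $\tilde X$ to $X$ point by point over the singular locus. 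Above each singular point $P$ of $X$ lie closed points $Q_1,\dots,Q_r$ of $\tilde X$. The key local input is
$$
\delta_P\ \ge\ \sum_i \deg Q_i-\deg P,
$$
which holds because $\overline{\mathcal O_P}/\mathfrak m_P\overline{\mathcal O_P}$ surjects onto $\prod_i \kappa(Q_i)$.

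The step where $N=\#X(\F_q)$ can exceed $\#\tilde X(\F_q)$ is a rational $P$ none of whose preimages is rational. This is the only case that must be controlled, since gluing rational points of $\tilde X$ only decreases $N$ and is paid for trivially by $\delta_P\ge0$.

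The main obstacle is the worst case of this step: a rational singular point $P$ whose preimage is a single point of degree $2$. Then $\delta_P\ge 1$ and the cyclotomic part of $L_X$ acquires the factor $1+T$, i.e. a root $\beta=-1$. Relative to $\tilde X$, the point gains $\psi(t)$ on the left and loses the contribution $2\psi_2(t)$ of the degree-$2$ point. I therefore need
$$
\sum_{n\ge1}(-1)^{n+1}c_nt^n\ \le\ \frac{\delta_P}{\sqrt q+1}.
$$
Note that $\sum_{n\ge 1}(-1)^{n+1}t^n=t/(1+t)=1/(\sqrt q+1)$, which is exactly the constant in (\ref{fe_LT}). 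This strongly suggests that the proof hinges on this identity together with $0\le c_n\le 1$ and the positivity $f(\pi)=1+2\sum(-1)^nc_n\ge0$.

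As I see it, the most delicate point is that $c_n\in[0,1]$ alone does not bound the alternating sum termwise: $c_1=1$ with all other $c_n=0$ already fails. So I would first try to absorb the excess into the smooth part. The term $\sum_j f(\theta_j)\ge 0$ has been thrown away so far. I would attempt a Poisson-kernel averaging argument, writing $1+2\sum c_nt^n\cos(n\theta)=\frac{1}{2\pi}\int f(\varphi)P_t(\theta-\varphi)\,d\varphi\ge 0$ and evaluating at $\theta=\pi$. Weighted against the number $\Delta_X$ of cyclotomic roots, this should give a uniform lower bound on $\mathrm{Re}\sum_n c_nt^n\beta_k^n$. The constant $1/2$ that this produces by itself is precisely the one in the Aubry--Perret formula (\ref{fe_aubry}). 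The improvement to $1/(\sqrt q+1)$ has to come from combining it with the local inequality $\delta_P\ge\sum\deg Q_i-1$, which makes $\delta$ larger than the number of roots $-1$ one actually gets.

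Finally, I sum the local estimates over all singular points. I use $\sum_P\delta_P=\pi-g$ and drop the remaining nonnegative terms $dB_d\psi_d(t)$ and $f(\theta_j)$. Dividing by $\psi(t)>0$ then gives (\ref{fe_LT}).
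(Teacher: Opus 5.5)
There is a genuine gap, and it sits exactly at the analytic core. You never prove the inequality $-\mathrm{Re}\,\psi(q^{-1/2}e^{i\theta})\le \frac{1}{\sqrt q+1}$, which is Lemma \ref{LT} in the paper.

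Your local bookkeeping is sound and is a legitimate alternative to the paper's global use of (\ref{pts_sing}) with $\Delta_X\le\pi-g$. Only rational singular points all of whose preimages have degree $\ge 2$ need attention, and $\delta_P\ge\sum_i\deg Q_i-1$ is correct.

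The problem is your Poisson-kernel step. It only uses $\frac{1}{2\pi}\int f(\varphi)P_t(\theta-\varphi)\,d\varphi\ge 0$, which gives the constant $\frac12$. You then hope to recover $\frac{1}{\sqrt q+1}$ from slack in $\delta_P\ge\sum\deg Q_i-1$, and that cannot work. Take the case you single out yourself: a rational point with a single degree-$2$ preimage, for instance a node whose two tangents are conjugate over $\F_q$. There $\delta_P=1$ and there is exactly one root $-1$, so there is no slack at all. Yet $\frac12>\frac{1}{\sqrt q+1}$ for every $q\ge 2$. Absorbing the excess into $\sum_j f(\theta_j)$ is also empty, since those terms have nothing to do with the singular points. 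As written, your argument only reproduces (\ref{fe_aubry}).

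The missing idea is to keep the positive lower bound of the Poisson kernel instead of replacing it by $0$. Since $f\ge 0$, $\frac{1}{2\pi}\int_0^{2\pi}f=1$ and $P_t(\theta)\ge P_t(\pi)=\frac{1-t}{1+t}$, you get $1+2\,\mathrm{Re}\,\psi(te^{i\theta})\ge\frac{1-t}{1+t}$. That is, $-\mathrm{Re}\,\psi(te^{i\theta})\le\frac{t}{1+t}=\frac{1}{\sqrt q+1}$ for \emph{every} $\theta$; only $f\ge0$ is used here, not $c_n\le 1$.

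You need this for every $\theta$, not just $\theta=\pi$, because calling the degree-$2$ preimage the worst case is unjustified. For a preimage of degree $e\ge 3$ the required estimate is
$\psi(t)-e\psi_e(t)=-\sum_{\zeta^e=1,\,\zeta\neq 1}\mathrm{Re}\,\psi(\zeta t)\le\frac{e-1}{\sqrt q+1}\le\frac{\delta_P}{\sqrt q+1}$.
This follows from the general lemma applied at the nontrivial $e$-th roots of unity; several preimages are handled by keeping just one of them. With the lemma in hand your local argument closes. The paper applies the same lemma globally to the cyclotomic roots $\beta_k$ of $L_X(T)$.
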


To prove Proposition \ref{prop_fe_LT} we need the following lemma, which is a result obtained by Lachaud and Tsfasman within their work on explicit formulas for the number of points on varieties defined over finite fields (see \cite{lachaud}).

\begin{lemme}[Lemma 3.8 in \cite{lachaud}] \label{LT}
For all $\theta \in \mathbb{R}$, we have the  inequality
$$
-\textnormal{Re}(\psi(q^{-1/2}e^{i\theta})) \leq \frac{1}{\sqrt{q}+1}.
$$
\end{lemme}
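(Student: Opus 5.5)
The plan is to prove Lemma \ref{LT} by integrating the non-negative function $f$ against a suitably chosen non-negative kernel. Its Fourier coefficients are arranged so that the integral equals $t+(1+t)\,\textnormal{Re}\,\psi(te^{i\theta})$, where $t=q^{-1/2}$. Non-negativity of this integral is exactly the claimed inequality, since
$$
-\textnormal{Re}\,\psi(te^{i\theta})\leq \frac{t}{1+t}=\frac{1}{\sqrt{q}+1}.
$$
Throughout, $f$ is the doubly positive trigonometric polynomial fixed above. I expect only $f\geq 0$ to be needed, not $c_n\geq 0$.

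\textbf{Step 1: the kernel.} Recall the Poisson kernel for $0\leq t<1$:
$$
P_t(x)=\frac{1-t^2}{1-2t\cos x+t^2}=1+2\sum_{n\geq 1}t^n\cos(nx).
$$
Its values lie between $\frac{1-t}{1+t}$ and $\frac{1+t}{1-t}$. Fix $\theta$ and set
$$
K(\varphi)=\frac{1+t}{2}\,P_t(\varphi-\theta)-\frac{1-t}{2}.
$$
The lower bound on $P_t$ gives $K\geq 0$. This step is the key choice, and I expect it to be the main obstacle. The naive kernel $P_t(\varphi-\theta)$ alone only yields $-\textnormal{Re}\,\psi\leq 1/2$. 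To do better, one rescales it and subtracts the largest constant that keeps it non-negative, and the extreme value $\frac{1-t}{1+t}$ of $P_t$ makes this work out to exactly $t/(1+t)$.

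\textbf{Step 2: the integral.} Expand $K$ in Fourier series:
$$
K(\varphi)=t+(1+t)\sum_{n\geq 1}t^n\cos(n\theta)\cos(n\varphi)+(1+t)\sum_{n\geq 1}t^n\sin(n\theta)\sin(n\varphi).
$$
Since $f$ is even with mean $1$, orthogonality gives
$$
0\leq\frac{1}{2\pi}\int_0^{2\pi}f(\varphi)K(\varphi)\,d\varphi=t+(1+t)\sum_{n\geq 1}c_nt^n\cos(n\theta).
$$
All sums are finite because $f$ is a trigonometric polynomial, so there are no convergence issues.

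\textbf{Step 3: conclusion.} The right-hand side of Step 2 equals $t+(1+t)\,\textnormal{Re}\,\psi(te^{i\theta})$. Rearranging and substituting $t=q^{-1/2}$ gives the lemma.
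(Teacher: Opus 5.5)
Your proof is correct and is essentially the paper's argument. The paper obtains a probability measure $\mu$ with $1+2\,\textnormal{Re}\,\psi(te^{i\theta})=\int P_t(\theta-\varphi)\,d\mu(\varphi)$ from Herglotz's theorem and then applies the same lower bound $P_t\geq \frac{1-t}{1+t}$. You instead take $\mu$ to be $f(\varphi)\,d\varphi/2\pi$ explicitly, which is what the Herglotz measure is here since $f$ is a nonnegative trigonometric polynomial of mean $1$; your condition $K\geq 0$ is just $(1+t)$ times the paper's inequality $\frac12(P_t-1)\geq -\frac{t}{1+t}$, so your version avoids Herglotz and makes visible that only $f\geq 0$ is used.
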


To make things clearer for the reader and because this lemma is not presented in this form in \cite{lachaud}, we provide a proof below. In fact, in \cite{lachaud} the result is stated in a more general framework, which is that of smooth algebraic varieties, whereas our proof is adapted to the context considered in this work.

\begin{proof}
We set $\tilde{f}(z)=1+2\text{Re}(\psi(z))$. In particular, we have $\tilde{f}(q^{-1/2}e^{i\theta}) = 1 + 2\displaystyle \sum_{n\geq 1}c_nq^{-n/2}\cos(n\theta)$. Since $\tilde{f}$ is the real part of a holomorphic function, it is harmonic. Moreover, the double positivity of $f$ implies that $\tilde{f}$ is positive on the unit disk. We can therefore apply the Herglotz's representation theorem and deduce that there exists a probability measure $\mu$ on the unit circle such that:
$$
\tilde{f}(q^{-1/2}e^{i\theta}) = \int_0^{2\pi} P_{q^{-1/2}}(\theta - \varphi)d\mu(\varphi),
$$
where $P_r$ is the Poisson kernel defined by $P_r(\theta)=  \frac{1-r^2}{1-2r\cos(\theta)+r^2}$. Consequently, we deduce that
$$
\text{Re}(\psi(q^{-1/2}e^{i \theta})) = \int_0^{2\pi} \frac{1}{2}(P_{q^{-1/2}}(\theta - \varphi)-1) d\mu(\varphi).
$$
Now, one has
$$
\frac{1-q^{-1/2}}{1+q^{-1/2}}=P_{q^{-1/2}}(\pi) \leq P_{q^{-1/2}}(\theta) \leq P_{q^{-1/2}}(0) = \frac{1+q^{-1/2}}{1-q^{-1/2}},
$$
hence
$$
\frac{-q^{-1/2}}{1+q^{-1/2}}\leq \frac{1}{2}(P_{q^{-1/2}}(\theta-\varphi)-1) \leq \frac{q^{-1/2}}{1-q^{-1/2}}.
$$
We deduce that $\text{Re}(\psi(q^{-1/2}e^{i \theta})) \geq \frac{-q^{-1/2}}{1+q^{-1/2}} = -\frac{1}{\sqrt{q}+1}$, i.e. $-\text{Re}(\psi(q^{-1/2}e^{i \theta})) \leq \frac{1}{\sqrt{q}+1}$.
\end{proof}

\begin{proof}[Proof of Proposition \ref{prop_fe_LT}]
We begin by recalling the following relation between rational points and closed points of the curve:
$$
\sum_{d \mid n}dB_d(X) = \# X(\F_{q^n}).
$$
This equality, together with (\ref{pts_sing}), gives
$$
\sum_{d \mid n}dB_d(X) = q^n +1 - 2q^{n/2}\sum_{j=1}^g\cos(n\theta_j) -\sum_{k=1}^{\Delta_X}{\beta_j}^n.
$$
Multiplying by $c_nq^{-n/2}$ and summing over $n$, we get
$$
\sum_{j=1}^g f(\theta_j) + \sum_{d\geq 1} d B_d(X) \psi_d(q^{-1/2}) = g + \psi(q^{-1/2})+\psi(q^{1/2})-\sum_{j=1}^{\Delta_X}\psi(q^{-1/2}\beta_j) .
$$
Furthermore, since the left-hand side is real we deduce that
$$
\sum_{j=1}^g f(\theta_j) + \sum_{d\geq 1} d B_d(X) \psi_d(q^{-1/2}) = g + \psi(q^{-1/2})+\psi(q^{1/2})-\sum_{j=1}^{\Delta_X}\text{Re}(\psi(q^{-1/2}\beta_j)) .
$$
Since $f$ is doubly positive, we obtain
$$
\# X(\F_q)\psi(q^{-1/2}) \leq g + \psi(q^{-1/2})+\psi(q^{1/2})-\sum_{j=1}^{\Delta_X}\text{Re}(\psi(q^{-1/2}\beta_j)) .
$$
The result follows by applying Lemma \ref{LT} and the inequality $\Delta_X \leq \pi -g$.
\end{proof}

\begin{remarque} \label{rmk_comp2}
The bound in Proposition \ref{prop_fe_LT} is always better than bound (\ref{fe_aubry}) since $\frac{1}{\sqrt{q}+1}< \frac{1}{2}$ for any prime power $q$.
\end{remarque}

\subsection{Doubly positive trigonometric polynomials} \label{oesterlé}

We describe here the optimization introduced by Oesterlé in order to obtain optimal trigonometric polynomials for Serre's explicit formula (see Corollary \ref{borne_exp_lisse}). Since this work has not been published by Oesterlé, the best reference about this topic can be found in Chapter 6 in \cite{serre}. Oesterlé's optimization is crucial for our work, as we will use some of these polynomials in the next section to study $N_q(g,\pi)$.

\hspace{3cm}

Let $\lambda$ be an integer greater than $q$. Define $r$ as the unique integer satisfying $\sqrt{q}^r<\lambda \leq \sqrt{q}^{r+1}$ and set 
$$
u =  \frac{\sqrt{q}^{r+1}-\lambda}{\lambda\sqrt{q}-\sqrt{q}^r}.
$$
Then there exists a unique real $\varphi_0$ such that
$$
\varphi_0 \in \bigg[\frac{\pi}{r+1},\frac{\pi}{r}\bigg[ \quad \text{ and } \quad \cos\bigg(\frac{r+1}{2}\varphi_0\bigg)+u\cos\bigg(\frac{r-1}{2}\varphi_0\bigg)=0.
$$

We recall the following positivity result.

\begin{proposition}[Proposition 6.2.3 in \cite{serre}]
For $1\leq n \leq r-1$, we set
$$
c_n = \frac{(r-n)\cos(n\varphi_0)\sin(\varphi_0)+\sin((r-n)\varphi_0)}{r\sin(\varphi_0)+\sin(r\varphi_0)}.
$$
Then, the trigonometric polynomial $f$ defined by
$$
f(\theta) = 1 + 2\sum_{n=1}^{m-1}c_n\cos(n\theta),
$$
is doubly positive.
\end{proposition}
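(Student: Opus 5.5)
The plan is to show that $f$ is, up to a positive constant, the squared modulus of a trigonometric polynomial whose coefficients are all positive. Nonnegativity of $f$ and of the $c_n$ then both follow at once. (The upper limit of summation in the statement should read $r-1$, matching the range of definition of the $c_n$; I work with that reading.) The condition $\cos(\frac{r+1}{2}\varphi_0)+u\cos(\frac{r-1}{2}\varphi_0)=0$ should play no role in positivity; it is only relevant for optimality. I will use only $\varphi_0\in[\frac{\pi}{r+1},\frac{\pi}{r}[$, and in fact only $0<\varphi_0<\pi/r$.

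First I introduce, for $0\le k\le r-1$, the centred coefficients
$$
a_k=\cos\Big(\big(k-\tfrac{r-1}{2}\big)\varphi_0\Big),
$$
and set $A(\theta)=\sum_{k=0}^{r-1}a_ke^{ik\theta}$. Expanding gives $|A(\theta)|^2=\gamma_0+2\sum_{n=1}^{r-1}\gamma_n\cos(n\theta)$, where $\gamma_n=\sum_{k=0}^{r-1-n}a_ka_{k+n}$. Writing $x_k=(k-\frac{r-1}{2})\varphi_0$ and using $\cos x\cos(x+n\varphi_0)=\frac12\big(\cos(n\varphi_0)+\cos(2x+n\varphi_0)\big)$, I get
$$
2\gamma_n=(r-n)\cos(n\varphi_0)+\sum_{k=0}^{r-1-n}\cos\big((2k-r+1+n)\varphi_0\big).
$$

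The main computational step is evaluating this last sum. Its arguments form an arithmetic progression with step $2\varphi_0$ that is symmetric about $0$, with $r-n$ terms. The standard identity $\sum\cos=\frac{\sin(N\varphi)}{\sin\varphi}$ for such a symmetric progression of $N$ terms then shows that the sum equals $\frac{\sin((r-n)\varphi_0)}{\sin\varphi_0}$. Hence
$$
2\gamma_n\sin\varphi_0=(r-n)\cos(n\varphi_0)\sin\varphi_0+\sin((r-n)\varphi_0).
$$
For $n=0$ this gives $r\sin\varphi_0+\sin(r\varphi_0)$, which is exactly the denominator of $c_n$. So $c_n=\gamma_n/\gamma_0$ for $1\le n\le r-1$, and therefore $f(\theta)=|A(\theta)|^2/\gamma_0$.

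It remains to check signs. Since $0<\varphi_0<\pi/r$, every angle $(k-\frac{r-1}{2})\varphi_0$ has absolute value at most $\frac{r-1}{2}\varphi_0<\frac{\pi}{2}$, so every $a_k>0$. Consequently $\gamma_0=\sum a_k^2>0$, so $f$ is well defined and $f(\theta)=|A(\theta)|^2/\gamma_0\ge0$ for all $\theta$. Likewise each $\gamma_n$ is a sum of products of positive numbers, hence positive, so $c_n>0$ for $1\le n\le r-1$. This proves that $f$ is doubly positive. The only delicate point is getting the sign and normalisation in the cosine-sum identity right; the choice of centred coefficients is precisely what makes the autocorrelation come out as $\sin((r-n)\varphi_0)/\sin\varphi_0$ with no leftover phase factor.
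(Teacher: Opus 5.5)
The paper does not prove this statement; it quotes it from Serre's book (Proposition 6.2.3) and uses it as a black box, so there is no in-paper argument to compare yours with. Your proof is correct and complete.

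The key identity checks out. With $N=r-n$, $\sum_{j=0}^{N-1}\cos((N-1-2j)\varphi_0)=\sin(N\varphi_0)/\sin\varphi_0$, so $2\gamma_n\sin\varphi_0$ is exactly the numerator of $c_n$ and $2\gamma_0\sin\varphi_0$ is its denominator. Since $\frac{r-1}{2}\varphi_0<\frac{\pi}{2}$, all $a_k$ are positive. Hence $\gamma_0>0$, $\gamma_n>0$ for $1\le n\le r-1$, and $f=|A|^2/\gamma_0\ge 0$.

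You are right that the upper summation limit $m-1$ is a notational slip for $r-1$. You are also right that the equation involving $u$ plays no role in positivity. Your argument gives double positivity for every $\varphi_0\in\,]0,\pi/r[$; the particular $\varphi_0$ matters only when the bound is optimized.

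For comparison, one can also sum $A$ in closed form:
\[
e^{-i\frac{r-1}{2}\theta}A(\theta)=\frac{\cos\frac{(r+1)\varphi_0}{2}\cos\frac{(r-1)\theta}{2}-\cos\frac{(r-1)\varphi_0}{2}\cos\frac{(r+1)\theta}{2}}{\cos\varphi_0-\cos\theta}.
\]
This exhibits $f$ as $1/\gamma_0$ times the square of a real function, which makes $f\ge 0$ evident. In that form, however, the sign of the $c_n$ is not immediate. When $n\varphi_0>\pi/2$ (for instance $r=4$, $n=3$, $\varphi_0=\pi/5$, which is the paper's own $r=4$ example), the numerator of $c_n$ contains the negative term $(r-n)\cos(n\varphi_0)\sin\varphi_0$, and a separate estimate is needed. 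Your description of $c_n$ as the normalized autocorrelation $\sum_k a_ka_{k+n}$ of a positive sequence gets both halves of double positivity at once, which is the advantage of your route.
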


\begin{exemple}
Let $r\geq 2$ and suppose that $\lambda = {\sqrt{q}}^{r+1}$. In this case the computations are straightforward. Indeed, $u=0$ and $\varphi_0 = \frac{\pi}{r+1}$, so the corresponding trigonometric polynomial depends only on the choice of $r$. For instance,
\begin{itemize}
\item $r=2$: $f(\theta)=1+\cos(\theta)$,

\item  $r=3$: $f(\theta)=1+\sqrt{2}\cos(\theta)+\frac{1}{2}\cos(2\theta)$,

\item $r=4$: $f(\theta)=1+\frac{1+\sqrt{5}}{2}\cos(\theta)+\frac{2\sqrt{5}}{5}\cos(2\theta)+\frac{5-\sqrt{5}}{10}\cos(3\theta)$ and

\item $r=5$: $f(\theta)=1+\sqrt{3}\cos(\theta)+\frac{7}{6}\cos(2\theta)+\frac{\sqrt{3}}{3}\cos(3\theta)+\frac{1}{6}\cos(4\theta)$.
\end{itemize}

\end{exemple}

In the rest of the paper, we will use the doubly positive polynomials introduced in the previous example. For each polynomial, it is therefore necessary to determine which formula among (\ref{fe_aubry}), Proposition \ref{prop_fes} and Proposition \ref{prop_fe_LT} provides the best bound.\\
As already mentioned in Remark \ref{rmk_comp2}, the explicit formula given in Proposition \ref{prop_fe_LT} is always better than (\ref{fe_aubry}). Hence, it suffices to compare explicit formulas from Propositions \ref{prop_fes} and \ref{prop_fe_LT}. As in Remark \ref{rmk_comp}, we can show that Proposition \ref{prop_fe_LT} gives better results than Proposition \ref{prop_fes} if and only if $\psi(q^{-1/2})>\frac{1}{\sqrt{q}+1}$. For $r=2$, it is easy to check that this condition is never satisfied, regardless of the value of $q$. However, in this case the bound of Proposition \ref{prop_fes} coincides with the Aubry-Perret bound (see Example \ref{ex_fe}), which is of no interest here since our goal is precisely to obtain new estimates. Moreover, after some tedious computations, one can show that for $r=3,4$ and $5$, the condition holds respectively for  $q<13$, $q\leq 49$ and $q<131$. In practice, we will be interested in relatively small values of $q$.  Consequently, Proposition \ref{prop_fe_LT} will always provide the best estimate in the situations of interest, and we shall therefore restrict ourselves to this explicit formula in the rest of the paper.

\section{Some exact values of $N_q(g,\pi)$}

\subsection{Exacts values deduced from explicit formulas}

We begin by recalling some informations about $N_q(g,\pi)$. It is defined by Aubry and Iezzi (see \cite{iezzi}) as the maximum number of rational points on a curve defined over $\F_q$ of geometric genus $g$ and arithmetic genus $\pi$. In particular, it is a generalization of $N_q(g)$ since $N_q(g)=N_q(g,g)$. Moreover, the Aubry-Perret bound (\ref{AP}) gives
$$
N_q(g,\pi) \leq q+1+2g\sqrt{q}+\pi-g.
$$

In this section, we determine exact values of $N_q(g,\pi)$ for arbitrary triples $(q,g,\pi)$ using Proposition \ref{prop_fe_LT} and the doubly positive polynomials presented in the previous section. Since $N_q(g) \leq N_q(g,\pi)$ (Proposition 4.1 in \cite{iezzi}), sufficiently precise upper bounds obtained via explicit formulas allow us to determine an exact value of $N_q(g,\pi)$, provided that $N_q(g)$ is known. This is the purpose of the next proposition.

\begin{proposition} \label{N_q(g,pi)}
We have the following values for $N_q(g,\pi)$.\\
\noindent
\begin{minipage}[t]{0.33\textwidth}
\begin{itemize}[label=]
  \item $N_2(0,2)=4$,
  \item $N_2(5,6)=9$,
  \item $N_3(15,16)=28$,
  \item $N_4(26,27)=55$.
\end{itemize}
\end{minipage}
\begin{minipage}[t]{0.33\textwidth}
\begin{itemize}[label=]
  \item $N_2(1,2)=5$,
  \item $N_2(6,7)=10$,
  \item $N_3(15,17)=28$,
\end{itemize}
\end{minipage}
\begin{minipage}[t]{0.33\textwidth}
\begin{itemize}[label=]
  \item $N_2(4,5)=8$,
  \item $N_3(0,4)=7$,
  \item $N_4(9,10)=26$,
\end{itemize}
\end{minipage}
\end{proposition}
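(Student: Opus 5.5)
The plan is to prove each of the ten equalities by matching a lower bound with an upper bound. The upper bound will come from Proposition \ref{prop_fe_LT}, and the lower bound from known values of $N_q(g)$ or from an explicit singular curve.

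\textbf{Lower bounds.} For the eight pairs with $g\geq 1$ I will use $N_q(g)\leq N_q(g,\pi)$ (Proposition 4.1 in \cite{iezzi}) together with the tabulated values of $N_q(g)$ (e.g.\ from manypoints.org). These should be $N_2(1)=5$, $N_2(4)=8$, $N_2(5)=9$, $N_2(6)=10$, $N_3(15)=28$, $N_4(9)=26$ and $N_4(26)=55$. I will check each against the tables. Since $N_2(0)=3$ and $N_3(0)=4$, the two genus $0$ cases need explicit curves. I will build them by pinching closed points of $\mathbb{P}^1$ to rational points.
\begin{itemize}
\item Take a closed point of degree $d$ on the normalization and replace its local ring by the preimage of $\F_q$ in its residue field. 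This creates one new rational point and adds $d-1$ to $\delta$.
\item For $q=2$, pinch one of the two degree-$3$ points of $\mathbb{P}^1_{\F_2}$. This gives $\delta=2$ and $3+1=4$ rational points.
\item For $q=3$, pinch each of the three degree-$2$ points of $\mathbb{P}^1_{\F_3}$. This gives $\delta=3$ and $4+3=7$ rational points.
\end{itemize}
The existence of such a curve with prescribed local rings is standard: one glues the subring into the structure sheaf of the normalization, and the result is projective and absolutely irreducible.

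\textbf{Upper bounds.} For each triple $(q,g,\pi)$ I will evaluate the right-hand side of Proposition \ref{prop_fe_LT} and take the integer part. I will use the doubly positive polynomials of the example with $r=3,4,5$, where $\psi(t)=\sum c_nt^n$ is read off from the listed $f$. In the relevant range ($q\leq 4$) this formula is the best of the three available, as noted in Subsection \ref{oesterlé}. Note that the Aubry--Perret bound (\ref{AP}) already fails in the simplest case: for $(q,g,\pi)=(2,0,2)$ it gives $5$, not $4$. So the sharper formula is genuinely needed.

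\textbf{Main obstacle.} The hard step is choosing $r$ for each case so that the floor equals the target value. For instance, for $(2,5,6)$ the choice $r=3$ gives $\psi(2^{-1/2})=5/8$ and a bound of about $12$, which is far from $9$. So a larger $r$ will be needed there, probably $r=5$ or beyond. If the polynomials of the example with $\lambda=\sqrt{q}^{r+1}$ are not sharp enough, I will instead use Oesterlé's general polynomial with a suitable integer $\lambda$ and the corresponding $\varphi_0$. I would also first try $r=4,5$ for the larger genera $(3,15,\cdot)$, $(4,9,10)$ and $(4,26,27)$. Each case then reduces to a numerical verification, done with enough precision to certify that the bound lies strictly below the next integer.
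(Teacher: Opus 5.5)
Your plan has the same structure as the paper's proof. The upper bounds come from Proposition~\ref{prop_fe_LT} with the $r=3,4,5$ polynomials of the example, and for $g\geq 1$ the lower bounds come from $N_q(g)\leq N_q(g,\pi)$ and the tabulated values. However, your genus-$0$ lower bound for $q=3$ does not prove what you need.

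\textbf{The gap.} Pinching the three degree-$2$ points of $\mathbb{P}^1_{\F_3}$ gives $\delta=3\cdot(2-1)=3$. The resulting curve therefore has $\pi=3$, not $\pi=4$. It proves $N_3(0,3)\geq 7$, which is exactly the value the paper quotes from Corollary 5.4 of \cite{iezzi}, but it says nothing yet about $N_3(0,4)$. You need one more unit of $\delta$ without losing a rational point. Any of the following works:
\begin{itemize}
\item pinch two degree-$2$ points and one degree-$3$ point, giving $\delta=1+1+2=4$ and $4+3=7$ rational points;
\item keep your three pinches and replace the local ring at one rational point $P$ by $\F_3+\mathfrak{m}_P^2$, a cusp with $\delta_P=1$;
\item invoke the monotonicity $N_q(g,\pi)\leq N_q(g,\pi+1)$, which is what the paper implicitly uses when it passes from $N_3(0,3)$ to $N_3(0,4)$.
\end{itemize}
Your $q=2$ construction is correct: pinching one degree-$3$ point gives $\delta=2$ and four rational points.

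\textbf{Choice of polynomials.} You do not need Oesterl\'e's general polynomials. The paper uses $r=3$ for $(2,0,2)$, $(2,1,2)$ and $(3,0,4)$, $r=4$ for $(4,9,10)$, and $r=5$ for the six remaining triples; all of these floor to the target. The choice is forced in several cases and the margins can be thin:
\begin{itemize}
\item for $(3,0,4)$, only $r=3$ works among $r=3,4,5$, with bound $\approx 7.995$;
\item for $(4,9,10)$, $r=5$ gives $\approx 28.5$, so you need $r=4$, which gives $\approx 26.95$;
\item for $(2,4,5)$, $r=4$ gives $\approx 9.1$, so you need $r=5$, which gives $\approx 8.75$.
\end{itemize}
Your numerical checks therefore have to be certified to a few decimal places.
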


\begin{proof}
Let us detail the case $q=2$ and $g=4$. It is known that $N_2(4)=8$, hence $N_2(4,\pi) \geq 8$ for every $\pi \geq 4$. Consider now the trigonometric polynomial 
$$
f(\theta)=1+2c_1\cos(\theta)+2c_2\cos(2\theta)+2c_3\cos(3\theta)+2c_4\cos(4\theta)
$$ 
with $c_1=\frac{\sqrt{3}}{2}$, $c_2=\frac{7}{12}$, $c_3=\frac{\sqrt{3}}{6}$ and $c_4=\frac{1}{12}$. With this choice Proposition \ref{prop_fe_LT} gives $N_2(4,5) \leq 8$ and thus $N_2(4,5) = 8$.\\
We proceed in the same way to determine the other values of $N_q(g,\pi)$. The values $N_2(0,2)$, $N_2(1,2)$ and $N_3(0,4)$ are found by applying Proposition \ref{prop_fe_LT} with the polynomial $f(\theta) = 1 + \sqrt{2}\cos(\theta) + \frac{1}{2}\cos(2\theta)$. Moreover, for $N_2(0,2)$ and $N_3(0,4)$ we also use the values of $N_2(0,1)$ and $N_3(0,3)$ given in Corollary 5.4 in \cite{iezzi}. The value of $N_4(9,10)$ is obtained with the polynomial $f(\theta) = 1 + \frac{1+\sqrt{5}}{2}\cos(\theta) + \frac{2\sqrt{5}}{5}\cos(2\theta)+ \frac{5-\sqrt{5}}{10}\cos(3\theta)$. Finally, the remaining values are deduced by using the same polynomial as for $N_2(4,5)$.
\end{proof}

\begin{remarque}
We have relied on data available on the website \url{www.manypoints.org} (see \cite{manypoints}) for the values of $N_q(g)$ quoted in the proof.
\end{remarque}

\subsection{The case $g=2$}

There are other methods to determine values of $N_q(g,\pi)$, especially when one is interested in so-called $\delta$\textit{-optimal} curves (see \cite{iezzi}). Recall that $\delta = \pi -g$ and that a curve $X$ is said to be $\delta$-optimal if $\#X(\F_q)=N_q(g)+\pi-g$. Moreover, if the curve reaches the Aubry-Perret bound (\ref{AP}), i.e. if $\#X(\F_q)=q+1+2g\sqrt{q}+\pi-g$, then $X$ is called \textit{maximal}. The following result deals with the number of rational points on such curves.

\begin{theoreme}[Theorem 5.3 in \cite{iezzi}] \label{thm_anna}
Let $X$ be a curve defined over $\F_q$ of geometric genus $g$ and arithmetic genus $\pi$. Then $X$ is $\delta$-optimal if and only if its normalization satisfies 
$$
\#\tilde{X}(\F_q)=N_q(g),
$$
and
$$
\pi-g\leq B_2(\mathcal{X}_q(g)),
$$
where $B_2(\mathcal{X}_q(g))$ denotes the maximal number of points of degree $2$ on the set $\mathcal{X}_q(g)$ of smooth optimal curves defined over $\F_q$ of genus $g$.
\end{theoreme}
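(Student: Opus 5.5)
The plan is to refine the Aubry--Perret inequality (\ref{ine}) into a point-by-point count, so that the equality case of (\ref{ine}) characterises $\delta$-optimality. The converse direction will be a standard gluing construction. Throughout, let $\nu:\tilde{X}\to X$ be the normalization. I read the statement in the natural way: the necessity direction bounds $\pi-g$ by the number of degree-$2$ points of the optimal curve $\tilde{X}$, hence by $B_2(\mathcal{X}_q(g))$. The sufficiency direction asserts that a $\delta$-optimal curve of geometric genus $g$ and arithmetic genus $\pi$ exists whenever $\pi-g\leq B_2(\mathcal{X}_q(g))$.

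\emph{Local step.} For a point $P\in X$ with preimages $Q_1,\dots,Q_s$ in $\tilde{X}$, the ring $\overline{\mathcal{O}_P}$ is semi-local with residue algebra $\prod_i \F_q(Q_i)$, and $\mathcal{O}_P/\mathfrak{m}_P$ embeds in it. Filtering $\overline{\mathcal{O}_P}\supset \mathcal{O}_P$ through the Jacobson radical gives
$$\delta_P \;\geq\; \sum_{i}\deg Q_i-\deg P,$$
with equality only if the conductor of $\mathcal{O}_P$ is the radical of $\overline{\mathcal{O}_P}$.

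\emph{Global counting.} Write
$$\#X(\F_q)-\#\tilde{X}(\F_q)=\sum_{P \text{ rational}}\bigl(1-\#\{\text{rational } Q_i \text{ over } P\}\bigr).$$
A rational $P$ contributes positively only if no $Q_i$ is rational. In that case it contributes exactly $1$, while the local step gives $\delta_P\geq \sum\deg Q_i-1\geq 1$. Equality $\delta_P=1$ then forces a unique preimage $Q$, of degree $2$. Every other singular point contributes at most $0$ while having $\delta_P\geq 1$. Summing over $P$ recovers (\ref{ine}) in the sharper form $\#X(\F_q)\leq \#\tilde{X}(\F_q)+\delta$. Equality holds iff every singular point of $X$ is rational, has $\delta_P=1$, and has a single preimage, which is a point of degree $2$ of $\tilde{X}$.

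\emph{Necessity.} If $\#X(\F_q)=N_q(g)+\delta$, then the chain
$$N_q(g)+\delta=\#X(\F_q)\leq \#\tilde{X}(\F_q)+\delta\leq N_q(g)+\delta$$
forces $\#\tilde{X}(\F_q)=N_q(g)$, so $\tilde{X}$ is optimal. It also forces the equality case above. The $\delta$ singular points then correspond to $\delta$ distinct degree-$2$ points of $\tilde{X}$, so $\pi-g=\delta\leq B_2(\tilde{X})\leq B_2(\mathcal{X}_q(g))$.

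\emph{Sufficiency.} Take an optimal curve $Y$ of genus $g$ with $B_2(Y)=B_2(\mathcal{X}_q(g))$, and choose $\delta=\pi-g$ distinct points $Q_1,\dots,Q_\delta$ of degree $2$ on $Y$. Define $X$ by replacing each local ring $\mathcal{O}_{Q_i}$ with the subring $\F_q+\mathfrak{m}_{Q_i}$. This is Serre's construction of a curve from a modulus, or equivalently a pushout along $\operatorname{Spec}\F_{q^2}\to\operatorname{Spec}\F_q$. The result is a projective, absolutely irreducible curve with normalization $Y$, each new point being rational with $\delta_P=\dim_{\F_q}\F_{q^2}/\F_q=1$. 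Hence $\pi(X)=g+\delta$ and $\#X(\F_q)=N_q(g)+\delta$.

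I expect the main obstacle to be the local inequality $\delta_P\geq\sum\deg Q_i-\deg P$ together with its equality analysis, particularly showing that $\delta_P=1$ at a rational point with no rational preimage forces a single preimage of degree exactly $2$. I would first handle this by the two-step filtration $\mathcal{O}_P\subset \mathcal{O}_P+\mathrm{rad}(\overline{\mathcal{O}_P})\subset\overline{\mathcal{O}_P}$, computing the dimension of the outer quotient via the residue algebra. The gluing in the converse direction is classical, and I would simply cite it.
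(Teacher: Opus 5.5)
The paper does not prove this theorem. It quotes it from \cite{iezzi} and uses it as a black box, so there is no internal argument to compare against. Your proof stands on its own, and it is correct and complete.

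Your existence reading of the converse is not just natural but necessary, because for a fixed curve the literal ``if'' is false. For example, the cuspidal cubic $y^2z=x^3$ over $\F_q$ has optimal normalization $\mathbb{P}^1$ and $\pi-g=1\leq B_2(\mathbb{P}^1)=\frac{q^2-q}{2}$. Yet it has only $q+1$ rational points, because its cusp lies under a rational point. Your reading is also exactly how the paper applies the theorem in its last subsection (``by Theorem \ref{thm_anna} one could construct a singular curve\ldots'').

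On the route: the only related ingredient the paper supplies is the inequality (\ref{ine}), quoted from Aubry--Perret. That inequality alone gives the chain forcing $\#\tilde{X}(\F_q)=N_q(g)$. However, it says nothing about where the singular points sit, so it cannot yield $\pi-g\leq B_2$.

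Your point-by-point count rests on the local bound $\delta_P\geq\sum_i\deg Q_i-\deg P$, which is the classical bound $\delta\geq r-1$ ($r$ the number of branches) summed over the geometric points above $P$. It proves the needed half of (\ref{ine}) and its equality case at the same time. That equality case is precisely what supplies $\delta$ distinct degree-$2$ points on $\tilde{X}$.

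Your sufficiency step, replacing $\mathcal{O}_Q$ by $\F_q+\mathfrak{m}_Q$, is the same pinching as the construction of Theorem 4.2 in \cite{iezzi}. The paper invokes that construction later for points of degree $3$, where a degree-$d$ point becomes a rational point with $\delta_P=d-1$.
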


Theorem \ref{thm_anna} tells us that if one wants to know the exact values $N_q(g,\pi)$ in the case where $N_q(g,\pi)=N_q(g)+\pi-g$, it is essential to determine $B_2(\mathcal{X}_q(g))$. In particular, the theorem specializes when one considers maximal curves. Indeed, in this situation the number of points of degree $2$ on Weil-maximal curves is known explicitly (Proposition 4.1 in \cite{iezzi2}).\\
In this section we focus more particularly on $N_q(2,\pi)$. In the smooth case, the values of $N_q(2)$ have been completely determined by Serre (see Chapter 3 in \cite{serre}). The goal here is therefore to obtain information about the maximal number of points of degree $2$ of smooth curves that have $N_q(2)$ rational points. One way to do this is to study the defect of these curves. Recall that the \textit{defect} of a smooth curve $X$ is defined as the difference $q+1+gm-\#X(\F_q)$, where $m=[2\sqrt{q}]$. For curves with $N_q(2)$ rational points, the defect cannot exceed $3$ (Theorem 3.2.3 in \cite{serre}), hence the number of possible values of $x_1$ and $x_2$, where $x_i=\omega_i+\overline{\omega}_i$ for $i=1,2$, is limited. By finding these $x_i$ we can then determine $B_2(\mathcal{X}_q(2))$.\\
The possible values of $(x_1,x_2)$ were fully studied by Smyth (see \cite{smyth}) up to defect $6$. For our purpose, namely the case of genus $2$ curves of defect at most $3$, the different possibilities are summarized in the following lemma.

\begin{lemme}[Smyth, \cite{smyth}] \label{defect}
Let $q$ be a power of a prime and let $m=[2\sqrt{q}]$. Then the values of $(x_1,x_2)$ are as follow:\\
\begin{minipage}[t]{0.5\textwidth}
\textbullet Defect $0$: \begin{itemize}[label=]
\item $\pm(m,m)$.
\end{itemize}
\end{minipage}
\begin{minipage}[t]{0.5\textwidth}
\textbullet Defect 1: \begin{itemize}[label=]
\item $\pm(m,m-1)$,
\item $\pm\bigg(m+\frac{\sqrt{5}-1}{2},m+\frac{-1-\sqrt{5}}{2}\bigg)$.
\end{itemize}
\end{minipage}
\begin{minipage}[t]{0.5\textwidth}
\textbullet Defect $2$:\begin{itemize}[label=]
\item $\pm(m,m-2)$,
\item $\pm(m-1,m-1)$,
\item $\pm(m-1+\sqrt{2},m-1-\sqrt{2})$,
\item $\pm(m-1+\sqrt{3},m-1-\sqrt{3})$.
\end{itemize}
\end{minipage}
\begin{minipage}[t]{0.5\textwidth}
\textbullet Defect $3$:\begin{itemize}[label=]
\item $\pm(m,m-3)$,
\item $\pm(m-1,m-2)$,
\item $\pm\bigg(m+\frac{\sqrt{21}-3}{2},m+\frac{-3-\sqrt{21}}{2}\bigg)$,
\item $\pm\bigg(m+\frac{\sqrt{17}-3}{2},m+\frac{-3-\sqrt{17}}{2}\bigg)$,
\item $\pm\bigg(m+\frac{\sqrt{13}-3}{2},m+\frac{-3-\sqrt{13}}{2}\bigg)$,
\item $\pm\bigg(m+\frac{\sqrt{5}-3}{2},m+\frac{-3-\sqrt{5}}{2}\bigg)$.
\end{itemize}
\end{minipage}

\end{lemme}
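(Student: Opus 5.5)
The plan is to reduce the problem to a classification of totally real algebraic integers of degree at most $2$ confined to a short interval. For a genus $2$ curve $X$ over $\F_q$, set $x_i=\omega_i+\overline{\omega}_i$. The $x_i$ are real and satisfy $|x_i|\le 2\sqrt{q}$. The polynomial $h(t)=(t-x_1)(t-x_2)$ has integer coefficients, since it is determined by $L_X(T)$.

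From the Weil formula (\ref{weil}) with $n=1$ we get $\#X(\F_q)=q+1-(x_1+x_2)$. The defect $q+1+2m-\#X(\F_q)$ therefore equals $2m+(x_1+x_2)$ in the case $x_i\ge -m$ (the sign $-$ branch). The case with the opposite sign, where $\#X(\F_q)$ is replaced by $q+1+(x_1+x_2)$, is symmetric. So I normalise to $y_i=m-x_i$ after a global sign change, which is what produces the $\pm$ in the statement. The defect is then $d=y_1+y_2$, and each $y_i$ satisfies $y_i\ge m-2\sqrt{q}>-1$.

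The key step is to enumerate, for $d\in\{0,1,2,3\}$, all monic integer quadratics $(t-y_1)(t-y_2)=t^2-dt+e$ with real roots both strictly greater than $-1$. When $h$ is reducible, the $y_i$ are integers $\ge 0$ summing to $d$. This gives $(0,0)$; $(0,1)$; $(0,2),(1,1)$; $(0,3),(1,2)$, which translate to $(m,m)$, $(m,m-1)$, $(m,m-2)$, $(m-1,m-1)$, $(m,m-3)$, $(m-1,m-2)$.

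When $h$ is irreducible, the conditions are:
\begin{itemize}
\item the discriminant $d^2-4e$ is positive and not a square;
\item the smaller root $(d-\sqrt{d^2-4e})/2$ exceeds $-1$, which is equivalent to $1+d+e>0$.
\end{itemize}
So $e\ge -d$ and $e<d^2/4$. The cases are:
\begin{itemize}
\item $d=0$: we need $e<0$ and $e\ge 0$, which is impossible.
\item $d=1$: only $e=-1$, discriminant $5$.
\item $d=2$: $e\in\{-1,0\}$, giving discriminants $8,4$; the value $4$ is a square, so we keep $e=-1$ with roots $1\pm\sqrt2$, and $e=-2$ is allowed too since $1+2-2>0$, giving $1\pm\sqrt3$.
\item $d=3$: $e\in\{-3,-2,-1,1\}$, giving discriminants $21,17,13,5$.
\end{itemize}
Translating back via $x_i=m-y_i$ reproduces exactly the listed values.

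The main obstacle is being careful about whether the problem should also include non-strict bounds, and about the sign convention in the $\pm$ (the sign in front of $(x_1,x_2)$ versus the sign inside $\#X(\F_q)=q+1\mp\cdots$). One should also note that this lemma only asserts the possible shapes, not their realisability for a given $q$. Realisability is handled later using the fractional part of $2\sqrt q$ and Honda–Tate, and is not needed here.
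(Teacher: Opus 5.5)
Your argument is correct, but it takes a different route from the paper. The paper gives no proof of this lemma. It imports Smyth's classification of totally positive algebraic integers of small trace, and the remark after the lemma supplies the dictionary $x_i=m+1-\lambda_i$. Here the $x_i$ are sign-normalised, i.e.\ the $-$ branch the paper actually uses later, e.g.\ $-(m,m-3)$ for $q=4$, which agrees with your normalisation. In your notation this reads $\lambda_i=y_i+1$, so your constraints are exactly Smyth's: $\lambda_1+\lambda_2=d+2$, and your criterion $1+d+e>0$ is precisely positivity of the norm $\lambda_1\lambda_2=(y_1+1)(y_2+1)$. Together with the positive trace, that is total positivity.

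What your approach buys is a self-contained, elementary derivation. In genus $2$ the real Weil polynomial is quadratic, so the classification reduces to scanning the finite window $-d\le e<d^2/4$ and discarding square discriminants. You also see that the list is exactly the solution set of these constraints, not merely contained in it. What the citation buys is generality: the same dictionary works in any genus, where the $x_i$ may have higher degree and a hand enumeration is no longer available.

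Two cosmetic points:
\begin{itemize}
\item The identity ``defect $=2m+x_1+x_2$'' holds unconditionally, so drop the qualifier ``in the case $x_i\ge -m$''.
\item State the enumeration cleanly rather than self-correcting mid-case. The ranges are $e\in\{-1,0\}$ for $d=1$, $e\in\{-2,-1,0\}$ for $d=2$, and $e\in\{-3,\dots,2\}$ for $d=3$. Then remove $e=0$, $e=0$, and $e\in\{0,2\}$ respectively, because their discriminants are squares.
\end{itemize}
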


\begin{remarque}
Lemma \ref{defect} is not presented in the same way as in \cite{smyth}. In his paper, Smyth studies totally positive algebraic integers and, for each defect, provides a list of polynomials whose roots $\lambda_i$ are these alebraic integers. In our context, it should be noted that each $x_i$, which is defined as twice the integer part of the algebraic integer $\omega_i$, can be expressed in the form $x_i = m+1-\lambda_i$. For further details, see Chapter 2 in \cite{serre}.
\end{remarque}

Of course, not all these values necessarily correspond to a curve. For instance, one must also have $\vert x_i \vert \leq 2\sqrt{q}$. Moreover, a result of Serre about the indecomposability of Jacobians further restricts the possible cases.

\begin{theoreme}[Theorem 2.4.1 in \cite{serre}] \label{jac}
Let $A$ be an abelian variety of dimension $g$ defined over $\F_q$. Suppose that $\{1,\dots,g\}$ can be partitioned into two non-empty subsets $I$ and $J$ such that
\begin{enumerate}
\item the algebraic integers $(x_i)_{i \in I}$ are permuted by $\textnormal{Gal}(\overline{\mathbb{Q}}/\mathbb{Q})$ (multiplicities included), and the same holds for the $(x_j)_{j \in J}$.
\item for every $i \in I$ and every $j \in J$, the algebraic integer $x_i-x_j$ is an algebraic unit.
\end{enumerate}
Then $A$ is not a Jacobian, i.e. the $x_i$ do not arise from a curve.
\end{theoreme}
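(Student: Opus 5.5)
The plan is to turn the arithmetic hypothesis into an idempotent in $\mathrm{End}(A)$. That idempotent splits every principally polarized structure on $A$ into a product, and a Jacobian cannot be such a product. Throughout, let $F$ be the Frobenius endomorphism of $A$ and $V=q/F$ the Verschiebung, so $FV=q$. The $x_i=\omega_i+\overline{\omega}_i$ are the roots, with multiplicity, of the characteristic polynomial $h(x)\in\mathbb{Z}[x]$ of the endomorphism $F+V$ acting on a Tate module $T_\ell(A)$.

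\textbf{Step 1 (factor $h$ over $\mathbb{Z}$).} By hypothesis 1, the polynomials $h_I=\prod_{i\in I}(x-x_i)$ and $h_J=\prod_{j\in J}(x-x_j)$ are monic with integer coefficients, and $h=h_Ih_J$. Their resultant is $\mathrm{Res}(h_I,h_J)=\prod_{i,j}(x_i-x_j)$. This is a rational integer and, by hypothesis 2, a unit, so it equals $\pm1$. Using the standard Sylvester-matrix identity $aH_I+bH_J=\mathrm{Res}(h_I,h_J)$, we obtain $a,b\in\mathbb{Z}[x]$ with $a h_I+b h_J=1$.

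\textbf{Step 2 (idempotent).} Set $e=(bh_J)(F+V)\in\mathbb{Z}[F+V]\subset\mathrm{End}(A)$. By Cayley--Hamilton, $h(F+V)=0$. Hence $e(1-e)=(ah_Ibh_J)(F+V)=0$, so $e$ is an idempotent. It is nontrivial: $e$ acts as $1$ on the generalized eigenspaces of $F+V$ with eigenvalues $x_i$, $i\in I$, and as $0$ on those with $i\in J$, and both $I$ and $J$ are non-empty. This gives a decomposition $A\cong A_1\times A_2$ with $A_1=eA$ and $A_2=(1-e)A$, both nonzero. The important point is that this is an isomorphism, not merely an isogeny.

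\textbf{Step 3 (polarizations split).} Let $\lambda$ be any principal polarization on $A$. The Rosati involution $\dagger$ attached to $\lambda$ satisfies $F^\dagger=V$, so it fixes $F+V$ and therefore fixes every element of $\mathbb{Z}[F+V]$, in particular $e$. The identity $e^\dagger=e$ means that $\lambda$ is compatible with the decomposition of Step 2. Concretely, the cross terms $A_1\to A_2^\vee$ of $\lambda$ vanish, so $(A,\lambda)\cong(A_1,\lambda_1)\times(A_2,\lambda_2)$ as principally polarized abelian varieties, with both factors nonzero.

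\textbf{Step 4 (conclusion).} If $A$ were the Jacobian of a curve $C$, its canonical principal polarization would have an irreducible theta divisor, namely the image of $C^{(g-1)}$. The theta divisor of a product of two nonzero principally polarized abelian varieties is $\Theta_1\times A_2\cup A_1\times\Theta_2$, which is reducible. This contradiction shows that $A$ is not a Jacobian. The same argument applies to every variety with the same $x_i$, i.e. to the whole isogeny class, since the hypotheses only involve $h$.

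\textbf{Main obstacle.} I expect the delicate point to be Step 3: justifying that a Rosati-symmetric idempotent really splits the polarization as a product. This amounts to showing that $\lambda$ restricted to $A_1\times A_2$ has no off-diagonal component, which I would check via $\lambda e=e^\vee\lambda$. A secondary point is ensuring that the integrality of the Bézout relation in Step 1 truly requires the resultant to be $\pm1$ and not merely nonzero. Without that, $e$ would only exist in $\mathrm{End}(A)\otimes\mathbb{Q}$, and we would only get a splitting up to isogeny.
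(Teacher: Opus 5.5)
Your argument is correct and is essentially Serre's own proof of his Theorem 2.4.1; the paper gives no proof of its own and only cites it. In outline: the resultant $\pm1$ yields an integral Bézout relation, hence a nontrivial idempotent of $\mathbb{Z}[F+V]$ fixed by every Rosati involution, so every principal polarization on $A$ splits as a product, contradicting the irreducibility of a Jacobian's theta divisor.

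One small slip needs fixing. The characteristic polynomial of $F+V$ on $T_\ell(A)$, which has rank $2g$, is $h^2$ rather than $h$, so Cayley--Hamilton alone only gives $h(F+V)^2=0$. You can deduce $h(F+V)=0$ from $0=P_A(F)=F^g\,h(F+V)$, where $P_A(T)=T^g h(T+q/T)$ is the characteristic polynomial of $F$ and $F$ is an isogeny. Alternatively, run your Bézout argument with $h_I^2$ and $h_J^2$, whose resultant is still $\pm1$.
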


We can now state the following theorem, which provides all exact values of $N_q(2,\pi)$ that equal $N_q(2)+\pi-2$. We consider different cases according to $q$, in particular the case where $q$ is \textit{special}. Recall that $q$ is said to be special if $q=p^e$ is not a square and if it satisfies one of the following two conditions:
\begin{enumerate}[label= \roman*)]
\item $p$ divides $m=[2\sqrt{q}]$.
\item $q$ can be represented by one of the polynomials $x^2+1$, $x^2+x+1$, $x^2+x+2$.
\end{enumerate}

\begin{theoreme} \label{g=2}
Let $q$ be a power of a prime $p$. Denote by $m=[2\sqrt{q}]$ the integer part of $2\sqrt{q}$ and by $\{2\sqrt{q}\}$ its fractional part.\\
If $q$ is a square:
\begin{itemize}[label=\textbullet]
\item $N_4(2,\pi)=10+\pi-2$ if and only if $2 \leq \pi \leq 6$.
\item $N_9(2,\pi)=20+\pi-2$ if and only if $2 \leq \pi \leq 26$.
\item $N_q(2,\pi)=q+1+4\sqrt{q}+\pi-2$ if and only if $2 \leq \pi \leq 2+\frac{q^2-5q-4\sqrt{q}}{2} \text{ and } q\neq 4,9$.
\end{itemize}
If $q$ is not a square:
\begin{itemize}[label=\textbullet]
\item If $q$ is non-special, \\
$N_q(2,\pi)=q+1+2m+\pi-2$ if and only if $2 \leq \pi \leq 2+\frac{q(q+3)}{2}-m(m+1)$.
\item If $q$ is special and $\{2\sqrt{q}\}>\frac{\sqrt{5}-1}{2}$,\\
$N_q(2,\pi)=q+2m+\pi-2$ if and only if $2 \leq \pi \leq 2+\frac{q(q+3)}{2}-m^2-1$.
\item If $q$ is special and $\{2\sqrt{q}\}<\frac{\sqrt{5}-1}{2}$,
\begin{itemize}
\item $N_q(2,\pi)=q-1+2m+\pi-2$ if and only if $2 \leq \pi \leq 2+\frac{q(q+3)}{2}-m(m-1) \text{ and } q\neq 2^5,2^{13}$.
\item $N_q(2,\pi)=q-1+2m+\pi-2$ if and only if $2 \leq \pi \leq 2+\frac{q(q+3)}{2}-m(m-1)-1 \text{ and } q = 2^5,2^{13}$.
\end{itemize}

\end{itemize}
\end{theoreme}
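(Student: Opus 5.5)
The plan is to reduce everything to Theorem \ref{thm_anna} with $g=2$. It says that $N_q(2,\pi)=N_q(2)+\pi-2$ holds exactly when $\pi-2\leq B_2(\mathcal{X}_q(2))$, since $\delta$-optimal curves of every $\delta\le B_2$ exist by that theorem. So the whole statement amounts to computing $B_2(\mathcal{X}_q(2))$ in each case, together with Serre's values of $N_q(2)$ (Chapter 3 in \cite{serre}). For a smooth genus $2$ curve $Y$, write $\#Y(\F_q)=q+1-(x_1+x_2)$ with $x_i=\omega_i+\overline{\omega}_i$. Using $\omega_i^2+\overline{\omega}_i^2=x_i^2-2q$ and $2B_2(Y)=\#Y(\F_{q^2})-\#Y(\F_q)$, I get
$$
B_2(Y)=\frac{q^2+3q-(x_1^2+x_2^2)+(x_1+x_2)}{2}.
$$
On the set $\mathcal{X}_q(2)$ the sum $x_1+x_2$ is fixed, because the number of rational points is $N_q(2)$. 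Hence maximizing $B_2$ means minimizing $x_1^2+x_2^2$ over the Weil polynomials that are actually realized by genus $2$ Jacobians with $N_q(2)$ points.

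\textbf{Square $q$, $q\neq 4,9$.} Here $N_q(2)=q+1+4\sqrt q$, and the only possibility is $x_1=x_2=-2\sqrt q$. The formula gives $B_2=\frac{q^2-5q-4\sqrt q}{2}$, which is exactly the stated range for $\pi$.

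\textbf{Non-special, non-square $q$.} The defect is $0$ and Serre shows that $(x_1,x_2)=-(m,m)$ is realized. This gives $B_2=\frac{q(q+3)}{2}-m(m+1)$.

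\textbf{Special $q$.} I would use Lemma \ref{defect} with the negative sign, together with the constraint $|x_i|\le 2\sqrt q$. Theorem \ref{jac} removes the candidates whose two parts differ by a unit, for instance $-(m,m-1)$ and $-(m,m-3)$.
\begin{itemize}[label=\textbullet]
\item Defect $1$: only the golden-ratio pair $-\big(m+\frac{\sqrt5-1}{2},\,m-\frac{1+\sqrt5}{2}\big)$ survives. The constraint $|x_i|\le 2\sqrt q$ holds for it precisely when $\{2\sqrt q\}>\frac{\sqrt5-1}{2}$. Plugging it into the formula should give $\frac{q(q+3)}{2}-m^2-1$.
\item Defect $2$, when $\{2\sqrt q\}<\frac{\sqrt5-1}{2}$: the sum $x_1+x_2=-(2m-2)$ is common to all four candidates. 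The sum of squares is minimized by the balanced pair $-(m-1,m-1)$, which gives $\frac{q(q+3)}{2}-m(m-1)$. The other pairs, $-(m,m-2)$ and $-(m-1\pm\sqrt2)$, $-(m-1\pm\sqrt3)$, are strictly less balanced and give smaller $B_2$.
\end{itemize}

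\textbf{Main obstacle: realizability.} One has to know exactly which of these Weil polynomials come from Jacobians. For the balanced pair $-(m-1,m-1)$ the question is whether the isogeny class of $E\times E$ with trace $m-1$ contains a Jacobian. I would invoke the known classification of such isogeny classes (Howe–Nart–Ritzenthaler, together with Serre's analysis in Chapter 3 of \cite{serre}). I expect it to show that this class contains a Jacobian for all special $q$ except $q=2^5$ and $2^{13}$.

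In those two exceptional cases the maximum of $B_2$ must then come from the next admissible candidate. I would recompute the sums of squares there carefully, possibly also comparing with a non-principal polarization variant, to check that the loss is exactly $1$ as the statement claims. This step is the one I am least sure will come out as stated, since the candidates listed above appear to lose $2$. The same issue arises for $q=4,9$: maximal genus $2$ curves do not exist, so $N_4(2)=10$ and $N_9(2)=20$. I would determine the actual $(x_1,x_2)$ of the optimal curves there by direct enumeration (small $q$, or existing tables). From these I would read off $B_2=4$ and $B_2=24$, matching the stated ranges $\pi\le 6$ and $\pi\le 26$.
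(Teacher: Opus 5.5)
Your overall route is the paper's: reduce via Theorem~\ref{thm_anna} to computing $B_2(\mathcal{X}_q(2))$, then feed in Serre's values of $N_q(2)$, Lemma~\ref{defect} and Theorem~\ref{jac}. Your closed formula $B_2=\frac{q^2+3q-(x_1^2+x_2^2)+(x_1+x_2)}{2}$ is correct. It reproduces the paper's counts in the square ($q\neq 4,9$), non-special, defect-$1$ and balanced defect-$2$ cases.

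\textbf{Main gap: $q=2^5,2^{13}$.} You leave this case open, and your doubt comes from an arithmetic slip. Since $m^2+(m-2)^2=2(m-1)^2+2$, the pair $-(m,m-2)$ gives $B_2=\frac{q(q+3)}{2}-m(m-1)-1$, a loss of exactly $1$. Only the $\sqrt2$ and $\sqrt3$ pairs lose $2$ and $3$. What is actually missing is the realizability input, in two parts.
\begin{itemize}
\item Here $p=2\nmid m$, so $m-1$ is even, nonzero and $\neq\pm\sqrt{2q}$. Hence no elliptic curve over $\F_q$ has trace $\pm(m-1)$, and by Honda--Tate no abelian surface has Weil polynomial $(T^2+(m-1)T+q)^2$. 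So the balanced pair is impossible, which gives the `only if' direction.
\item Conversely, you need that $-(m,m-2)$ is realized by a genus-$2$ curve. This is the result of Serre that the paper invokes, and it gives the `if' direction.
\end{itemize}
Non-principal polarizations play no role here. Likewise, for the other special $q$ you only \emph{expect} the balanced pair to occur; this must be an actual citation. The paper quotes Serre for $q\geq 5$ and treats $q=3$ with the explicit curve $y^2=x^6+x^4+x^2+1$, which has $3$ points of degree $2$.

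\textbf{Cases $q=4$ and $q=9$.} These are not proved either: you read $B_2=4$ and $B_2=24$ off the statement. You need both an upper bound and an existence statement, and with your formula both are short.
\begin{itemize}
\item For $q=4$ (defect $3$), the constraint $|x_i|\leq 4$ and Theorem~\ref{jac} (which removes $-(m-1,m-2)$) leave $-(m,m-3)$ and $-\bigl(m+\frac{-3+\sqrt5}{2},m+\frac{-3-\sqrt5}{2}\bigr)$. These give $B_2=3$ and $4$.
\item For $q=9$ (defect $2$), only $-(m,m-2)$ and $-(m-1,m-1)$ survive $|x_i|\leq 6$. These give $B_2=23$ and $24$.
\end{itemize}
You must then exhibit curves attaining $4$ and $24$, e.g.\ $y^2+y=\frac{x}{x^3+x+1}$ over $\F_4$ and $y^2=x^6+x^4+x^2-1$ over $\F_9$, as in the paper. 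An exhaustive search would also work, but it has to be carried out.

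Finally, your remark that Theorem~\ref{jac} removes $-(m,m-3)$ is wrong, since $3$ is not a unit. That pair is not excluded, and for $q=4$ it is a genuine candidate. It is harmless only because it gives $B_2=3<4$.
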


\begin{remarque}
Note that the case where $q$ is special and $\{2\sqrt{q}\} = \frac{\sqrt{5}-1}{2}$ is not covered by the theorem since Serre noticed that this situation cannot occur (Theorem 3.2.3 in \cite{serre}).
\end{remarque}

\begin{proof}
We proceed by considering each case of the theorem separately.\\
If $q$ is a square :
\begin{itemize}[label=\textbullet]
\item $q=4$. Let $X$ be a smooth curve that has $N_4(2) =10$ rational points. This curve has defect $3$, so according to Lemma \ref{defect} there are six possibilities for the values of $(x_1,x_2)$. However, using the fact that $\vert x_i \vert \leq 2\sqrt{q}=4$, one can restrict to the three cases
$$
-(m,m-3), \quad -(m-1,m-2) \quad \text{ and } \quad -\bigg(m+\frac{-3+\sqrt{5}}{2},m+\frac{-3-\sqrt{5}}{2}\bigg).
$$
Moreover, by Theorem \ref{jac}, the case $-(m-1,m-2)$ does not correspond to a curve. Indeed, $m-1-(m-2)=1$ is an algebraic unit. Thus only $-(m,m-3)$ and $-\left(m+\frac{-3+\sqrt{5}}{2},m+\frac{-3-\sqrt{5}}{2}\right)$ remain. In the first case, such a curve has $3$ points of degree $2$ whereas in the second case it has $4$. The curve given by the equation $y^2+y=\frac{x}{x^3+x+1}$ is a genus $2$ curve which has $10$ rational points over $\F_4$ (see Section 3.4 in \cite{serre}) and $4$ points of degree $2$. We conclude by applying Theorem \ref{thm_anna}.
\item $q=9$. Let $X$ be a smooth curve with $N_9(2) =20$ rational points. This curve has defect $2$, so there are four possibilities for the values of $(x_1,x_2)$. As in the previous case, noting that $\vert x_i \vert \leq 2\sqrt{q}=6$ leads to the two cases
$$
-(m,m-2) \quad \text{ and } \quad -(m-1,m-1).
$$
A curve such that $(x_1,x_2)=-(m,m-2)$ has $23$ points of degree $2$ whereas a curve such that $(x_1,x_2)=-(m-1,m-1)$ has $24$ points of degree $2$. Hence, it suffices to find a curve having $24$ points of degree $2$, which is the case of the curve $y^2=x^6+x^4+x^2-1$ (see Section 3.4 in \cite{serre}).
\item $q \neq 4,9$. According to Serre, in this case $N_q(2)=q+1+4\sqrt{q}$. The corresponding curves are Serre-Weil maximal curves, so $(x_1,x_2)$ is explicitly known to be $-(m,m)$. Such curves have $\frac{q^2-5q-4\sqrt{q}}{2}$ points  of degree $2$, which yields the result.
\end{itemize}
If $q$ is not a square:
\begin{itemize}[label=\textbullet]
\item $q$ non-special. In this case one knows that $N_q(2)=q+1+2m$. Such a curve has defect $0$, and we conclude as in the previous case.
\item $q$ special and $\{2\sqrt{q}\}>\frac{\sqrt{5}-1}{2}$. Then $N_q(2)=q+2m$, which corresponds to a defect $1$ case. The pair $(x_1,x_2)$ of a genus $2$ curve which has defect $1$ can only take the value $-(m+\frac{\sqrt{5}-1}{2},m+\frac{-1-\sqrt{5}}{2})$. Indeed, the case $-(m,m-1)$ is impossible by Theorem \ref{jac}. Thus such curves have $\frac{q(q+3)}{2}-m^2-1$ points of degree $2$.
\item $q$ special and $\{2\sqrt{q}\}<\frac{\sqrt{5}-1}{2}$. Assume $q\geq 5$. Serre showed that in the case where $p \mid m$, then $(x_1,x_2)$ can take the value $-(m-1,m-1)$. Likewise, if $p \nmid m$ and $p\neq 2$, then $-(m-1,m-1)$ is still possible. Finally, in the case where $p \nmid m$ and $p = 2$, which corresponds to $q=2^5$ or $2^{13}$, then $-(m,m-2)$ is possible. We conclude by computing the respective numbers of points  of degree $2$.\\
It remains to deal with the case $q=3$. Consider the curve $y^2=x^6+x^4+x^2+1$. This curve has $8$ rational points, hence it has defect $2$ (Theorem 3.7.1 in \cite{serre}). As shown by Serre, the couple $(x_1,x_2)$ can take the values
$$
-(m,m-2), \quad -(m-1,m-1) \quad \text{ and } \quad -(m+\sqrt{2}-1,m-\sqrt{2}-1).
$$
Moreover, one can show that this curve has $3$ points  of degree $2$, which corresponds to the case $-(m-1,m-1)$. We can thus deduce the result since the case $-(m-1,m-1)$ is the one that yields the largest possible number of points of degree $2$.
\end{itemize}
\end{proof}

\subsection{Results on smooth curves deduced from singular curves}

In this part we consider an example to show that explicit formulas can also indirectly provide information on the values of the algebraic integers $x_i=\omega_i+\overline{\omega}_i$ of the normalization of a curve.\\ Set $q=2$ and $g=3$. By applying Proposition \ref{prop_fe_LT} with the doubly positive polynomial $f(\theta)=1+\sqrt{3}\cos(\theta)+\frac{7}{6}\cos(2\theta)+\frac{\sqrt{3}}{3}\cos(3\theta)+\frac{1}{6}\cos(4\theta)$, we obtain the following bounds
$$
N_2(3,4) \leq 7, \quad N_2(3,5) \leq 8,\quad N_2(3,6) \leq 8 \quad \text{ and } \quad N_2(3,7) \leq 8.
$$ 
Moreover, since $N_2(3)=7$, it follows that $N_2(3,4) = 7$. Consequently, if $X$ is a smooth curve of genus $3$ defined over $\F_2$ with exactly $7$ rational points, then necessarily $B_2(X)=0$. Otherwise, if $B_2(X)\geq 1$, then by Theorem \ref{thm_anna} one could construct a singular curve of  geometric genus $g=3$ and arithmetic genus $\pi=4$ with $8$ rational points. This would contradict the fact that $N_2(3,4) = 7$. By a similar argument, the inequality $N_2(3,7) \leq 8$ implies $B_3(X) \leq 1$. This comes from a construction of singular curves proposed by Aubry and Iezzi (Theorem 4.2 in \cite{iezzi}). More particularly, if we suppose that $B_3(X) \geq 2$, then one could construct a singular curve having $9$ rational points and such that $g=3$ and $\pi=7$. This would contradict the inequality $N_2(3,7) \leq 8$. We then have $\#X(\F_2)=\#X(\F_4)=7$ and $\#X(\F_8)=7$ or $10$. Suppose first that $\#X(\F_8)=7$, the expression (\ref{weil}) gives us the following system
$$
\begin{cases}
\begin{array}{llllllr}
x_1 &+ &x_2 &+ &x_3 & = & -4 \\
{x_1}^2 &+ &{x_2}^2 &+ &{x_3}^2 & = & 10 \\
{x_1}^3 &+ &{x_2}^3 &+ &{x_3}^3 & = & -22
\end{array}
\end{cases}
$$
whose solution, up to permutation, is
$$
x_1=-2, \quad x_2=\sqrt{2}-1 \quad \text{ and } \quad x_3=-\sqrt{2}-1.
$$
Back to the previous notation, this gives
$$
(x_1,x_2,x_3) = -(m,m-\sqrt{2}-1,m+\sqrt{2}-1).
$$
However, by Theorem \ref{jac} we know that this triple of algebraic integers does not correspond to a curve. We deduce that $\#X(\F_8) \neq 7$ and hence $\#X(\F_8)=10$. This proves the following proposition.

\begin{proposition}
We have the following values for $N_q(g,\pi)$.\\
\noindent
\begin{minipage}[t]{0.33\textwidth}
\begin{itemize}[label=]
  \item $N_2(3,5)=8$,
\end{itemize}
\end{minipage}
\begin{minipage}[t]{0.33\textwidth}
\begin{itemize}[label=]
  \item $N_2(3,6)=8$,
\end{itemize}
\end{minipage}
\begin{minipage}[t]{0.33\textwidth}
\begin{itemize}[label=]
  \item $N_2(3,7)=8$.
\end{itemize}
\end{minipage}
\end{proposition}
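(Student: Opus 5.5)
The upper bounds are already available: applying Proposition \ref{prop_fe_LT} with the $r=5$ polynomial gives $N_2(3,5)\le 8$, $N_2(3,6)\le 8$ and $N_2(3,7)\le 8$. So all three equalities reduce to showing $N_2(3,\pi)\ge 8$ for $\pi=5,6,7$. Since $N_q(g,\pi)$ is non-decreasing in $\pi$ (one can always add singularities of degree of singularity one at already-singular or smooth rational points without losing rational points; I would cite Proposition 4.1 of \cite{iezzi}, or reprove it by the standard pinching construction), it suffices to exhibit a single curve with $g=3$, $\pi=5$ and $8$ rational points. Monotonicity then gives $8\le N_2(3,5)\le N_2(3,6)\le N_2(3,7)\le 8$.

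The construction uses the smooth optimal curve $X$ of genus $3$ over $\F_2$ with $\#X(\F_2)=7$. By the preceding discussion, $B_2(X)=0$ and $\#X(\F_8)=10$, so $3B_3(X)=10-7=3$ and $B_3(X)=1$. Let $Q$ be the unique closed point of degree $3$. I would then apply the Aubry--Iezzi construction (Theorem 4.2 in \cite{iezzi}), which replaces a closed point of degree $d$ on the normalization by a rational singular point and increases the arithmetic genus by $d-1$. Applied to $Q$, it produces a curve $Y$ with normalization $X$, $g=3$ and $\pi=3+2=5$. Its rational points are the $7$ rational points of $X$ plus the new rational point coming from $Q$, so $\#Y(\F_2)=8$. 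Concretely, $Y$ is obtained by replacing $\mathcal{O}_Q$ with the subring $\F_2+\mathfrak{m}_Q$, whose colength is $\deg Q-1=2$. This curve gives $N_2(3,5)\ge 8$, and the cases $\pi=6,7$ follow by monotonicity. Alternatively, one can pinch further: add a cusp at a rational point ($\delta=1$) to reach $\pi=6$, and a second one to reach $\pi=7$. Each such step keeps $8$ rational points.

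The main obstacle is making sure the degree-$3$ point genuinely exists. This is why the preceding argument showing $\#X(\F_8)=10$ is essential: it is what excludes $B_3(X)=0$. A second point to check is that the construction's $\delta$ contribution is exactly $d-1$, so that $\pi=5$ is hit precisely rather than overshot. I would verify this colength computation directly from the definition of $\delta_P$.
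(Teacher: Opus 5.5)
Your proof is correct and follows the paper's route. The upper bounds come from Proposition \ref{prop_fe_LT} with the $r=5$ polynomial. The lower bound comes from the degree-$3$ point forced by $\#X(\F_8)=10$, which the paper obtains from $B_2(X)=0$, $B_3(X)\le 1$ and Serre's indecomposability criterion ruling out $\#X(\F_8)=7$; that point is turned into an eighth rational point with $\delta=\deg Q-1=2$ via the Aubry--Iezzi construction. Your explicit treatment of $\pi=6,7$, by monotonicity or by adding cusps at smooth rational points, fills in a step the paper leaves implicit.
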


Finally, by solving the system once again, now with $\#X(\F_8)=10$, we deduce the following corollary.

\begin{corollaire}
A smooth curve defined over $\F_2$ of genus $3$ is optimal, i.e. has $7$ rational points, if and only if
$$
(x_1,x_2,x_3) = -\bigg(3-4\cos^2\bigg(\frac{\pi}{7}\bigg),3-4\cos^2\bigg(\frac{2\pi}{7}\bigg),3-4\cos^2\bigg(\frac{3\pi}{7}\bigg)\bigg).
$$
\end{corollaire}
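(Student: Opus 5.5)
The plan is to treat the two implications separately. The forward one uses the point counts $\#X(\F_2)=\#X(\F_4)=7$ and $\#X(\F_8)=10$ established just above; the converse is a direct evaluation of (\ref{weil}) for $n=1$.

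\emph{Forward direction.} Let $X$ be a smooth genus $3$ curve over $\F_2$ with $7$ rational points. By the preceding discussion, which uses $N_2(3,4)=7$, $N_2(3,7)\le 8$, Theorem \ref{thm_anna}, the Aubry--Iezzi construction and Theorem \ref{jac}, we have $\#X(\F_2)=\#X(\F_4)=7$ and $\#X(\F_8)=10$. Write $x_i=\omega_i+\overline{\omega}_i$ and use $\omega_i\overline{\omega}_i=q=2$. This gives $\omega_i^2+\overline{\omega}_i^2=x_i^2-4$ and $\omega_i^3+\overline{\omega}_i^3=x_i^3-6x_i$. Substituting into (\ref{weil}) for $n=1,2,3$ gives the power sums
\begin{equation*}
p_1=\sum x_i=-4,\qquad p_2=\sum x_i^2=10,\qquad p_3=\sum x_i^3=-25 .
\end{equation*}
Here $p_3$ comes from $10=9-(p_3-6p_1)$.

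Newton's identities then give the elementary symmetric functions $e_1=-4$, $e_2=(p_1^2-p_2)/2=3$ and $e_3=(p_3-e_1p_2+e_2p_1)/3=1$. Hence the $x_i$ are the roots of $t^3+4t^2+3t-1$. Equivalently, $y_i=-x_i$ are the roots of $y^3-4y^2+3y+1$.

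\emph{Identifying the roots.} Put $c_k=2\cos(2k\pi/7)$ for $k=1,2,3$; these are the roots of the classical cubic $c^3+c^2-2c-1$. Substituting $c=1-y$ into it gives exactly $-(y^3-4y^2+3y+1)$. So the $y_i$ are $1-2\cos(2k\pi/7)$, which equals $3-4\cos^2(k\pi/7)$ since $4\cos^2(k\pi/7)=2+2\cos(2k\pi/7)$. Up to permutation this gives the stated triple.

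\emph{Converse.} If $(x_1,x_2,x_3)$ is the stated triple, then $\sum x_i=-4$ (the trace of the cubic above). Formula (\ref{weil}) with $n=1$ then gives $\#X(\F_2)=3+4=7=N_2(3)$, so $X$ is optimal.

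The only delicate step is the input $\#X(\F_8)=10$, which needs the exclusion of $\#X(\F_8)=7$ via Theorem \ref{jac} and the bound $B_3(X)\le 1$ coming from $N_2(3,7)\le 8$. Both are already carried out in the text, so the remaining work is routine bookkeeping in Newton's identities and the cyclotomic cubic identity.
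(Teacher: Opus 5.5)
Your proof is correct and follows the paper's route. You feed $\#X(\F_2)=\#X(\F_4)=7$ and $\#X(\F_8)=10$ (from the preceding discussion) into the power-sum system and solve it. Your Newton-identity computation ($p_3=-25$, giving the cubic $t^3+4t^2+3t-1$) and the substitution $c=1-y$ into the minimal polynomial of $2\cos(2\pi/7)$ make explicit what the paper compresses into ``solving the system once again''. Your converse via $\sum x_i=-4$ is the easy direction, which the paper leaves implicit.
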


This result enables us to compute the zeta function of such a curve. Recall that in the case of a smooth curve $X$ defined over $\F_q$ of genus $g$, the zeta function of $X$ is a rational function of the form
$$
Z_X(T) =  \frac{ \prod_{j=1}^g(1-\omega_jT)(1-\overline{\omega}_jT)}{(1-T)(1-qT)}= \frac{ \prod_{j=1}^g(1-x_jT+qT^2)}{(1-T)(1-qT)}.
$$
In our example, $X$ is a smooth curve of genus $3$ defined over $\F_2$. Consequently,
$$
Z_X(T) = \frac{1+4T+9T^2+15T^3+18T^4+16T^5+8T^6}{(1-T)(1-2T)}.
$$
Note that this result is not new in the literature. Indeed, it is known that a genus $3$ curve is either hyperelliptic or a plane quartic. Hyperelliptic curves over $\F_2$ have at most $6$ rational points and plane curves cannot have more rational points than the projective plane itself, namely $7$. Moreover, Dickson (see \cite{dickson}) proved that there exists a unique plane quartic containing all $7$ rational points, this is the curve given by the equation
$$
x^3y+x^2y^2+xz^3+x^2z^2+y^3z+yz^3=0.
$$
One could easily recover the zeta function starting from this curve.

This example is not the only one for which explicit formulas on singular curves make it possible to obtain information about smooth curves. For instance, in the case where $q = 2$ and $g = 4$, this method shows that the algebraic integers $x_1,\dots, x_4$ of a smooth curve defined over $\F_2$ of genus $4$ having $N_2(4) = 8$ rational points must necessarily be of a permutation of 
$$
-\left(\frac{3+\sqrt{5}}{2},\frac{3-\sqrt{5}}{2},1-\sqrt{2},1+\sqrt{2}\right) \quad \text{or} \quad -(1-\sqrt{3},1+\sqrt{3},1,2).
$$
This is consistent with recent results obtained by Xarlès who computed equations of all genus $4$ curves defined over $\F_2$ up-to-isomorphism in \cite{xarles} and showed that there is actually only one curve with $8$ rational points. This curve corresponds to the 4-tuple
$$
(x_1,x_2,x_3,x_4) = -(1-\sqrt{3},1+\sqrt{3},1,2).
$$
In the same manner, when $q=2$ and $g=5$, the method provides three possibilities for the values of $x_1,\dots,x_5$. Again, following the work of Xarlès, Dragutinovi\'c showed that in fact only one of these possibilities corresponds to a curve since there is a unique genus $5$ curve defined over $\F_2$ with $N_2(5)=9$ rational points (see \cite{dragutinovic}). The corresponding $x_i$ are
$$
(x_1,x_2,x_3,x_4,x_5) = -(0,1-\sqrt{3},1+\sqrt{3},2,2).
$$
Nevertheless, these examples show that in certain cases explicit formulas can be very precise. By combining these estimates with the approach of Aubry and Iezzi, one can not only deduce new values of $N_q(g,\pi)$ but also obtain information about smooth curves. This makes the study of the number of rational points on singular curves even more interesting.\\ \

\noindent \textbf{Acknowledgement.} The author would like to express his sincere gratitude to his PhD supervisors, Yves Aubry and Fabien Herbaut. Their guidance, support and attentive listening were essential to the completion of this work. The author also wishes to thank Emmanuel Hallouin and Philippe Moustrou for valuable discussions on Oesterlé's optimization, as well as Marc Perret for useful comments on the paper.\\

\noindent \textsc{institut de mathématiques de toulon - imath, université de toulon, france}\\
\textit{Email adress:} \texttt{lorenzo-beninati@etud.univ-tln.fr}

\end{document}